\title{Reducing exit-times of diffusions with repulsive interactions}
\author[1]{Paul-Eric Chaudru de Raynal}
\author[2]{Manh Hong Duong}
\author[3]{Pierre Monmarch\'e}
\author[4]{Milica Toma\v sevi\'c}
\author[5]{Julian Tugaut}
\affil[1]{Nantes Universit\'e, CNRS, Laboratoire de Math\'ematiques Jean Leray, LMJL, F-44000 Nantes, France}
\affil[2]{School of Mathematics, University of Birmingham, B15 2TT Birmingham, UK}
\affil[3]{LJLL and LCT, Sorbonne Universit\'e, 4 place Jussieu, 75005 Paris, France}
\affil[4]{CNRS, CMAP, Ecole Polytechnique, I.P. Paris, Route de Saclay 91128 Palaiseau, France. }
\affil[5]{Universit\'e Jean Monnet, Institut Camille Jordan, 23, rue du docteur Paul Michelon,
CS 82301,
42023 Saint-\'Etienne Cedex 2,
France.}
\begin{document}

\newcommand{\bRb}{\mathbb{R}}
\newcommand{\bCb}{\mathbb{C}}
\newcommand{\bEb}{\mathbb{E}}
\newcommand{\bKb}{\mathbb{K}}
\newcommand{\bQb}{\mathbb{Q}}
\newcommand{\bFb}{\mathbb{F}}
\newcommand{\bGb}{\mathbb{G}}
\newcommand{\bNb}{\mathbb{N}}
\newcommand{\bZb}{\mathbb{Z}}

\newcommand{\deriv}{\stackrel{\mbox{\bf\Large{}$\cdot$\normalsize{}}}}
\newcommand{\dederiv}{\stackrel{\mbox{\bf\Large{}$\cdot\cdot$\normalsize{}}}}

\theoremstyle{plain} \newtheorem{thm}{Theorem}[section]
\theoremstyle{plain} \newtheorem{prop}[thm]{Proposition}
\theoremstyle{plain} \newtheorem{props}[thm]{Properties}
\theoremstyle{plain} \newtheorem{ex}[thm]{Example}
\theoremstyle{plain} \newtheorem{contrex}[thm]{Coounterexample}
\theoremstyle{plain} \newtheorem{cor}[thm]{Corollary}
\theoremstyle{plain} \newtheorem{hyp}[thm]{Hypothesis}
\theoremstyle{plain} \newtheorem{assu}[thm]{Assumption}
\theoremstyle{plain} \newtheorem{hyps}[thm]{Hypotheses}
\theoremstyle{plain} \newtheorem{lem}[thm]{Lemma}
\theoremstyle{plain} \newtheorem{rem}[thm]{Remark}
\theoremstyle{plain} \newtheorem{nota}[thm]{Notation}
\theoremstyle{plain} \newtheorem{defn}[thm]{Definition}

\newcommand{\cRc}{\mathcal{R}}
\newcommand{\cCc}{\mathcal{C}}
\newcommand{\cEc}{\mathcal{E}}
\newcommand{\cKc}{\mathcal{K}}
\newcommand{\cQc}{\mathcal{Q}}
\newcommand{\cFc}{\mathcal{F}}
\newcommand{\cGc}{\mathcal{G}}
\newcommand{\cNc}{\mathcal{N}}
\newcommand{\cZc}{\mathcal{Z}}
\newcommand{\cOc}{\mathcal{O}}
\newcommand{\cSc}{\mathcal{S}}
\newcommand{\cAc}{\mathcal{A}}
\newcommand{\cBc}{\mathcal{B}}
\newcommand{\cIc}{\mathcal{I}}
\newcommand{\cDc}{\mathcal{D}}
\newcommand{\cLc}{\mathcal{L}}
\newcommand{\cHc}{\mathcal{H}}

\newcommand{\Ima}{{\rm Im}}
\newcommand{\Rea}{{\rm Re}}
\newcommand{\gaga}{\left|\left|}
\newcommand{\drdr}{\right|\right|}
\newcommand{\lra}{\left\langle}
\newcommand{\rra}{\right\rangle}

\newcommand{\bal}{\begin{align}}
\newcommand{\eal}{\end{align}}
\newcommand{\beq}{\begin{equation}}
\newcommand{\eeq}{\end{equation}}

\newcommand{\ba}{\begin{align*}}
\newcommand{\be}{\begin{equation*}}
\newcommand{\ee}{\end{equation*}}

\newcommand{\EE}{\mathbb{E}}
\newcommand{\PP}{\mathbb{P}}

\newcommand{\sepa}{\left|\right.}

\newcommand{\crg}{[\![}
\newcommand{\crd}{]\!]}

\newcommand{\sgn}{{\rm Sign}}
\newcommand{\vari}{{\rm Var}}
\newcommand{\cov}{{\rm Cov}}
\newcommand{\poin}[1]{\dot{#1}}
\newcommand{\norm}[1]{\Vert #1\Vert}
\def\div{\mathop{\mathrm{div}}\nolimits}

\newcommand{\p}{\partial}
\newcommand{\x}{\partial_x}
\newcommand{\y}{\partial_y}
\newcommand{\z}{\partial_z}
\newcommand{\po}{\left(}
\newcommand{\pf}{\right)}
\newcommand{\co}{\left[}
\newcommand{\cf}{\right]}
\newcommand{\cco}{\llbracket}
\newcommand{\ccf}{\rrbracket}
\newcommand{\R}{\mathbb R}
\newcommand{\N}{\mathbb N}
\newcommand{\T}{\mathbb T}
\newcommand{\na}{\nabla}
\newcommand{\dd}{\mathrm{d}}
\newcommand{\1}{\mathbbm{1}}
\newcommand{\rmq}[1]{{\color{blue}(#1)}}

\makeatletter
\newcommand{\customlabel}[2]{%
   \protected@write \@auxout {}{\string\newlabel {#1}{{#2}{\thepage}{#2}{#1}{}}}%
   \hypertarget{#1}{#2\hspace{-0.14cm}}
}
\makeatother

\newcommand{\cset}{\mathbb{C}}
\newcommand{\rset}{\mathbb{R}}
\newcommand{\qset}{\mathbb{Q}}
\newcommand{\zset}{\mathbb{Z}}
\newcommand{\nset}{\mathbb{N}}
\newcommand{\nl}{\nolimits}
\newcommand{\ep}{\varepsilon}
\newcommand{\ind}{\mathbf{1}}
\newcommand{\fl}{\longrightarrow}
\newcommand{\e}{\mathbb{E}}
\newcommand{\E}{\mathbb{E}}
\newcommand{\gX}{\mathbf{X}}
\renewcommand{\d}{\,d}
\newcommand{\pp}{\mathcal P_2}
\newcommand{\lp}{\mathrm{L}}
\newcommand{\m}{\mathcal} 
\newcommand{\mL}{\mathcal{A}} 
\newcommand{\mA}{\mathcal{L}} 
\newcommand{\bx}{{\bf x}}
\newcommand{\by}{{\bf y}}
\newcommand{\ys}[1][2]{\mathcal{S}^{#1}}
\newcommand{\zs}[1][2]{\mathrm{M}^{#1}}
\newcommand{\ld}[1][\mu]{\text{D}_{#1}}

\newcommand \A[1]{{\bf (#1)}}

\maketitle

\begin{abstract}
 In this work we prove a Kramers' type law for the low-temperature behavior of the exit-times  from a metastable state for a class of self-interacting  nonlinear diffusion processes. Contrary to previous works, the interaction is not assumed to be convex, which means that this result covers cases where the exit-time for the interacting process is smaller than the exit-time for the associated non-interacting process. The technique of the proof is based on the fact that, under an appropriate contraction condition, the interacting process is conveniently coupled with a non-interacting (linear) Markov process where the interacting law is replaced by a constant Dirac mass at the fixed point of the deterministic zero-temperature process. 
\end{abstract}
\medskip

{\bf Key words and phrases:} exit-time, Kramer's law, self-interacting diffusion processes \par\medskip

{\bf 2000 AMS subject classifications:} Primary: 60F10; Secondary: 60H10,60J60 \par\medskip

\section{Introduction and main result}
\subsection{Motivations}
\label{sec:motivations}
In Monte Carlo Markov chain (MCMC) methods, high-dimensional expectations with respect to a given target probability distribution $\pi$, say on $\R^d$ with a Lebesgue density proportional to $\exp(-V)$ for some potential $V$, are estimated thanks to an ergodic law of large numbers:
\begin{equation}\label{eq:LGNergodic}
    \frac1t\int_0^t \varphi(X_s) \dd s \ \underset{t\rightarrow +\infty}\longrightarrow \ \int_{\R^d} \varphi(x) \pi(\dd x)\,,
\end{equation}
where $(X_t)_{t\geqslant0}$ is a Markov process designed to be ergodic with respect to $\pi$, and $\varphi$ is some observable of interest. A classical example is the overdamped Langevin diffusion
\begin{equation*}
\dd X_t \ = \ -\na V(X_t) \dd t + \sqrt 2 \dd B_t\,,
\end{equation*}
where $(B_t)_{t\geqslant 0}$ is a standard Brownian motion, see e.g. \cite{ActaNumericaLS}. The estimation given by  the convergence \eqref{eq:LGNergodic} is reasonable if $t$ is sufficiently large so that the process has visited during $[0,t]$ a representative sample (with respect to $\pi$) of the state space. In particular, if $\pi$ is multimodal, i.e. if $V$ has several local minima, then $t$ should be at least large enough so that some transitions have occurred between the basins of attraction of the main minima (where \emph{main} means: in term of contribution to the expectation). In the theoretical studies of this question, the difficulty of the problem is measured by the addition of a temperature parameter $\varepsilon>0$, the target density being proportional to $\exp(-V/\varepsilon)$, so that the multimodality worsens as $\varepsilon$ vanishes (i.e. at low temperature). The corresponding overdamped Langevin process is then
\begin{equation}
\label{eq:overdampedLangevin2}
\dd X_t \ = \ -\na V(X_t) \dd t + \sqrt {2\varepsilon} \dd B_t\,.
\end{equation}
As $\varepsilon$ vanishes, this is essentially a gradient descent, which means it converges quickly to a local minimum, and then has to wait a large deviation of the small Brownian noise to escape to another basin. As a consequence, transitions take a time that is exponentially large with $\varepsilon^{-1}$, which makes the convergence \eqref{eq:LGNergodic} very slow. This is a so-called metastable behaviour \cite{LelievreMetastable}. What is true for the overdamped Langevin diffusion is in fact generic for all classical samplers: indeed, in practice, we only have a local knowledge of the energy landscape, which prevents the design of Markov processes that perform large jumps. Standard samplers have thus continuous trajectories, or perform small jumps (the exploration is local). This implies that, in practice, during a transition from one basin to another, the time spent in the low-probability  area in between is lower bounded uniformly with $\varepsilon$. But then the ergodic property \eqref{eq:LGNergodic} implies that the ratio of the times spent in two areas tends to the ratio of the probabilities of those. As a consequence, we get that, roughly speaking, at low temperature,  for a Markov process ergodic with respect to $\pi$ and performing a local exploration, the time between transitions is in some sense necessarily of order at least $\exp( c/\varepsilon)$ where $c$ is the difference of energy between the local minima and the lowest saddle point in between.

Besides, this issue of metastability in the exploration of a high-dimensional non-convex energy landscape also arises for optimization problems. Indeed, consider the process
\[\dd X_t \ = \ -\na V(X_t) \dd t + \sqrt {2\varepsilon_t} \dd B_t\,,\]
where $t\mapsto \varepsilon_t$ is a decreasing vanishing map called the cooling schedule. This is a simple theoretic simulated annealing process. It is well-known that, provided $\varepsilon_t$ vanishes sufficiently slowly with $t$, then the process converges in probability to the global minima of $V$ \cite{HolleyKusuoakaStroock}. The picture is very similar to the question of the convergence of an ergodic mean at constant (small) temperature: indeed, the process needs sufficient time to cross energy barriers and find global minima. In both cases (sampling and optimization), the real problem is exploration (discovering the main basins of attraction).

As we saw, at low temperature, exits from local minima are rare events. A general method to tackle rare events issues is importance sampling: the target $\pi$ is replaced by a biased target $\tilde \pi \propto \exp (-(V-A)/\varepsilon)$ for some biasing potential $A$, in such a way the biased process
\[\dd X_t \ = \ -\na (V-A)(X_t) \dd t + \sqrt {2\varepsilon} \dd B_t\,,\]
is less metastable than the initial process, so that the convergence
\[\frac{\int_0^t \varphi(X_s)e^{-A(X_s)/\varepsilon}\dd s }{\int_0^t e^{-A(X_s)/\varepsilon}\dd s } = \frac{t^{-1}\int_0^t \varphi(X_s)e^{-A(X_s)/\varepsilon}\dd s }{t^{-1}\int_0^t e^{-A(X_s)/\varepsilon}\dd s } \underset{t\rightarrow+\infty}\longrightarrow \frac{\int_{\R^d} \varphi(x) e^{-A(x)/\varepsilon}\tilde \pi(x)}{\int_{\R^d}  e^{-A(x)/\varepsilon}\tilde \pi(x)} = \int_{\R^d} \varphi(x)\pi(\dd x)\]
is faster than \eqref{eq:LGNergodic} ($\tilde \pi$ should still reasonably close to $\pi$, otherwise the weights $e^{-A}$ in the estimator induce a large variance). Designing a good biasing potential $A$ is a difficult question. Starting from a given local minimum $a$ of $V$, a simple idea would be to take $A(x) = \alpha|x-a|^2$ for some $\alpha>0$, which would tend to repel the particle away from $a$. But this requires the knowledge of $a$; and then when the process has moved to another basin, another local minimum would have to be considered.

For this reason, in fact, as far as metastability is concerned, many strategies are based on \emph{adaptive} biasing potentials, i.e. $\nabla A$ is not fixed a priori but evolves in time, depending of some current knowledge of the energy landscape. The basic idea is the following: if the process has already spent a long time in some area, then we should add a repulsion force from this area to accelerate the escape. In many cases (see e.g. \cite{LelievreFreeEnergy,Fortetal,LelievreABF,BBM2020,Metadyn2} and references within) these adaptive biasing forces can be viewed as interactions with some probability law, i.e. the process is 
\begin{equation}\label{eq:non-linear}
    \dd X_t \ = \ -\nabla (V-A_{\mu_t})(X_t) \dd t + \sqrt {2\varepsilon} \dd B_t\,,
\end{equation}
where $\mu_t$ is a probability measure: typically, the occupation measure of the past trajectory $(X_s)_{s\in[0,t]}$ as in \cite{BBM2020}, or the empirical measure of a system of $N$ particles. If particles are repelled one from the other, the system will cover a larger area, enhancing the exploration. As $N$ goes to infinity, according to the propagation of chaos phenomenon, the particles become independent and the empirical measure of the system converges to the law of one of the particle, which leads to non-linear processes  $X$ that solves \eqref{eq:non-linear} with $\mu_t$ the law of $X_t$.

This leads  to the question addressed in the present work, that is the question of reducing, through self-interaction, exit-times from the vicinity of local minima of $V$ at low temperature.

\subsection{Framework and strategy}

Let $M \in \R^{d \otimes d}$, $a:\R^d \to \R^d$ and $\sigma \ge 0$. As discussed before, our aim consists in decreasing the exit-time $\mathcal T_\sigma(\mathcal{D}):=\inf\{t\geqslant 0\ : \  Y_t^\sigma \notin\mathcal D\}$ from a  domain $\mathcal D$ of the diffusion process that solves
\begin{equation}\label{linear_convex}
\dd Y_t^\sigma \ = \ a(Y_t^\sigma) \dd t  + \sigma M\dd B_t\,.
\end{equation}
For instance, the overdamped Langevin process \eqref{eq:overdampedLangevin2} corresponds to $a = -\na U$ and $M=\sqrt 2 I$. Besides, the general form \eqref{linear_convex} covers various other cases of interest, like the kinetic Langevin process or coloured noise processes, as  detailed in Section~\ref{subsec:ex-linear-drift} below.

It is  known that under suitable conditions, such an exit-time satisfies a so-called Kramers' type law: there exists $L>0$ (given as the solution of a variational problem) such that,  
for all $\delta>0$,
\begin{equation}
\label{kramer_linear}
\lim_{\sigma\to0}\PP\left\{\exp\left(\frac{2}{\sigma^2}(L-\delta)\right)\leq\mathcal T_\sigma(\mathcal{D})\leq\exp\left(\frac{2}{\sigma^2}(L+\delta)\right)\right\}=1\,.
\end{equation}

We are interested in modifications of \eqref{linear_convex} consisting in processes whose evolution at time $t$ depends on a given (deterministic) probability law $\mu_t^\sigma$. Denoting by $m_t^\sigma$ the law of the process at time $t$, the most classical case would be $\mu_t^\sigma = m_t^\sigma$, which gives a classical non-linear McKean-Vlasov diffusion. We have also in mind the case of so-called memorial McKean-Vlasov processes, where $\mu_t^\sigma = t^{-1}\int_0^t m_s^\sigma \dd s$. A convenient way to gather these two examples, and more generally memorial processes with a non-uniform memory kernel which can be motivated by applications in stochastic algorithms, is to consider $\mu_t^\sigma = \int_0^t m_s^\sigma   R(t,\dd s)$ where, for all $t\geqslant 0$, $R(t,\cdot)$ is a probability measure on $[0,t]$. The classical non-linear case then corresponds to $R(t,\cdot) = \delta_t$ for all $t\geqslant 0$, and the memorial process to the case where $R(t,\cdot)$ is the uniform law on $[0,t]$ for all $t\geqslant 0$.

Let $\mathcal P_2(\R^d)$ be the set of probability measures on $\R^d$ with a finite second moment and $\mathcal P(\R)$ the set of probability measures on $\R$. Given $b:\R^d \times \mathcal P_2(\R^d) \rightarrow \R^d$, an initial condition $m_0^\sigma \in \mathcal P_2(\R^d)$ and   $R:\R \rightarrow \mathcal P(\R)$, we  are thus interested in a process $(X_t^{\sigma})_{t\geqslant 0}$  with $X_0^{\sigma} \sim m_0^\sigma$ and such that, for all $t\geqslant 0$,
\begin{equation}\label{eq:init}
\left\{\begin{array}{rcl}
\dd X_t^{\sigma} &=& a(X_t^{\sigma}) \dd t +  b(X_t^{\sigma},\mu_t^\sigma) \dd t + \sigma M\dd B_t,\\
\mu^\sigma_t &:= & \int_0^t m_{s}^\sigma  R(t,\dd s)\qquad  m_t^\sigma : = \mathcal L(X_t^{\sigma}).
\end{array}\right.
\end{equation}
Conditions on $a$, $b$ and $R$ that ensures the existence and uniqueness in distribution of such a process will be discussed below. 
The main subject of this work is the exit-time 
\begin{equation}\label{sandra}
\tau_\sigma(\mathcal{D}):=\inf\left\{t\geq0\,\,:\,\,X_t^{\sigma}\notin\mathcal{D}\right\}
\end{equation}
of the above process from a given domain $\mathcal D$ on $\R^d$. We aim at proving a Kramers' law
\begin{equation}
\label{eq:KramerNonLin}
\forall \delta>0\,,\qquad \lim_{\sigma\to0}\PP\left\{\exp\left(\frac{2}{\sigma^2}(H-\delta)\right)\leq\tau_\sigma(\mathcal{D})\leq\exp\left(\frac{2}{\sigma^2}(H+\delta)\right)\right\}=1\,,
\end{equation}
for some $H>0$.

More precisely, our main contribution is Theorem \ref{th:mr} where we establish \eqref{eq:KramerNonLin} under conditions that cover cases where $H<L$ (with $L$ given in \eqref{kramer_linear}), which means that the exit-time is shorter for the interacting process than for the non perturbed diffusion \eqref{linear_convex}, contrary to the similar works \cite{EJP,ECP} which are restricted to convex interactions.

Our general strategy to establish \eqref{eq:KramerNonLin} is to prove, under some conditions   (see next section), the following. First, at a fixed $\sigma$, $\mu_t^\sigma$ converges in large time to  an equilibrium $\mu_\infty^\sigma$, at a speed \emph{that is uniform in} $\sigma$ (for the Wasserstein distance $\mathbb W_2$, see below). Second, as $\sigma$ vanishes, $\mu_\infty^\sigma$ converges to  $\delta_{\lambda}$ for some $\lambda \in \R^d$. Hence, the interacting process \eqref{eq:init} is expected to behave similarly to the  \emph{linear} (in the McKean-Vlasov sense) diffusion that solves
\begin{equation}
\label{roserouge}
\dd \tilde{X}_t^{\sigma} \ = \ a(\tilde{X}_t^{\sigma} ) \dd t +  b(\tilde{X}_t^{\sigma} ,\delta_{\lambda}) \dd t + \sigma M\dd B_t\,,
\end{equation}
for which the Kramers' law follows from classical results. In fact, more precisely, we can consider the two equations \eqref{eq:init} and \eqref{roserouge} simultaneously, driven by the same Brownian motion $(B_t)_{t\geqslant 0}$. A crucial point is then to prove that, at low temperature, the two processes will \emph{deterministically} stay  close one to the other, so that the exit of one of the process from an enlargement of $\mathcal D$ necessarily implies the exit of the other from $\mathcal D$.

\subsection{Assumptions and main results}\label{sec:assum_mainresult}

We divide the assumptions in three groups: the first group concerns the coefficients of the dynamics~\eqref{eq:init}, ensuring in particular the well-posedness of the process. In the second group we gather basic conditions on the domain and the initial condition. The third one states a Kramers' law for a linear process of the form~\eqref{roserouge}. 

We start by introducing the conditions \customlabel{A}{\A{A}}   that concern the drift functions $a$ and $b$ and the memory kernel $R$.

\begin{itemize}
\item[\customlabel{A8}{\A{A1}}] The function $a:\R^d\rightarrow \R^d$ (resp. $b:\R^d\times \mathcal P_2(\R^d) \rightarrow \R^d$) is locally (resp. globally) Lipschitz continuous.
\item[\customlabel{A9}{\A{A2}}] There exist $\rho>\kappa\geqslant 0$ such that for all $z,y\in\R^d$ and all $\mu,\nu\in\mathcal P_2(\R^d)$,
\begin{eqnarray}
\label{eq:a_contract}
(z-y) \cdot   \po a(z)+b(z,\mu)-a(y)-b(y,\nu) \pf & \leqslant & -\rho |z-y|^2 + \kappa \mathbb W_2^2(\nu,\mu)\,.
\end{eqnarray}
\item[\customlabel{A4}{\A{A3}}] For all $s\geqslant 0$, $R(t,[0,s]) \rightarrow 0$ as $t\rightarrow +\infty$ and for all continuous $f:\R\rightarrow\R$, $t\mapsto \int_0^t f(s)R(t,ds)$ is measurable.
\end{itemize}

Let us point out that $R(t,ds)=\delta_t(ds)$ or $R(t,ds)=\frac{1}{t}\mathds{1}_{[0;t]}(s)ds$ satisfy Assumption \ref{A4}. Moreover, \ref{A9} is true if $a$ satisfies \eqref{eq:a_contract} (applied with $b=0$, with $\kappa=0$) and  the Lipschitz constants of $b$, namely $\kappa_1,\kappa_2\geqslant 0$ such that
\begin{equation}
\label{pairet}
|b(z,\mu) - b(y,\nu)| \leqslant \kappa_1 |z-y| + \kappa_2 \mathbb W_2(\mu,\nu)
\end{equation}
for all $z,y\in\R^d$ and all $\mu,\nu\in\mathcal P_2(\R^d)$, satisfy $\kappa_1+\kappa_2<\rho$. However, \ref{A9} is weaker when the interaction is attracting, for instance if $b(x,\nu)=\alpha(\int_{\R^d} y \nu(dy) - x)$ with $\alpha>0$.

Let us deduce a few preliminary results from these first assumptions. First, we check that the interacting process is indeed well defined under \ref{A}.

\begin{prop}\label{prop:gen:wp} Under Assumption \ref{A}, the system \eqref{eq:init} admits a unique weak solution. 
\end{prop}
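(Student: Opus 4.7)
The plan is the standard McKean--Vlasov Picard scheme: realize the unique weak solution as the fixed point of a map $\Phi$ acting on the space of flows of marginal laws. Fix a horizon $T > 0$ and work on $\mathcal{E}_T := C([0,T]; \mathcal{P}_2(\R^d))$ with the uniform Wasserstein distance $d_T(m, \tilde m) := \sup_{t \in [0,T]} \mathbb{W}_2(m_t, \tilde m_t)$. Given $(m_t) \in \mathcal{E}_T$, set $\mu_t := \int_0^t m_s R(t, \dd s)$; this takes values in $\mathcal{P}_2(\R^d)$ by convexity of $|\cdot|^2$, and is measurable in $t$ thanks to \ref{A4}. Then solve the \emph{linearized} SDE
\begin{equation*}
\dd Y_t = a(Y_t) \dd t + b(Y_t, \mu_t) \dd t + \sigma M \dd B_t, \qquad Y_0 \sim m_0^\sigma,
\end{equation*}
whose drift is locally Lipschitz in $x$ by \ref{A8} and satisfies the one-sided bound $(x-y)\cdot[a(x) + b(x, \mu_t) - a(y) - b(y, \mu_t)] \leq -\rho|x-y|^2$ by \ref{A9} applied with $\mu = \nu = \mu_t$. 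Classical theory for monotone, locally Lipschitz SDEs yields a unique non-exploding strong solution with finite second moments, and we define $\Phi(m)_t := \mathcal{L}(Y_t)$.

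For the contraction of $\Phi$, couple the solutions $Y, \tilde Y$ corresponding to $m, \tilde m$ via the same Brownian motion and identical initial conditions. It\^o's formula together with \ref{A9} gives
\begin{equation*}
\tfrac{\dd}{\dd t}\,\E|Y_t - \tilde Y_t|^2 \ \leq\ -2\rho\,\E|Y_t - \tilde Y_t|^2 + 2\kappa\, \mathbb{W}_2^2(\mu_t, \tilde \mu_t).
\end{equation*}
Convexity of $\mathbb{W}_2^2$ and the fact that $R(t, \cdot)$ is a probability measure yield $\mathbb{W}_2^2(\mu_t, \tilde \mu_t) \leq \int_0^t \mathbb{W}_2^2(m_s, \tilde m_s) R(t, \dd s) \leq d_T(m, \tilde m)^2$, so Gronwall's lemma and the trivial bound $\mathbb{W}_2^2(\Phi(m)_t, \Phi(\tilde m)_t) \leq \E|Y_t - \tilde Y_t|^2$ combine into
\begin{equation*}
d_T(\Phi(m), \Phi(\tilde m))^2 \ \leq\ \tfrac{\kappa}{\rho}\bigl(1 - e^{-2\rho T}\bigr)\, d_T(m, \tilde m)^2.
\end{equation*}
Since $\kappa < \rho$ by \ref{A9}, this is a strict contraction on every $\mathcal{E}_T$ with ratio bounded uniformly in $T$; Banach's theorem supplies a unique fixed point, and the consistency of fixed points across horizons yields a unique global flow $(m_t^\sigma)_{t \geq 0}$. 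The associated process from the linearized step is then the desired unique weak solution of~\eqref{eq:init}.

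The main (mild) technical obstacle is not in the contraction step but in the linearized SDE: because \ref{A8} gives only \emph{local} Lipschitz regularity for $a$, non-explosion and strong well-posedness must be extracted from the monotonicity bound in \ref{A9} (together with the global Lipschitz control on $b$, which turns the second moment of $\mu_t$ into a linear-growth estimate for the drift), rather than from a direct global-Lipschitz argument. One also has to check that the measurability provided by \ref{A4} propagates to $t \mapsto \mu_t$ in a strong enough sense for the coefficients of the linearized SDE to be progressively measurable, but this is routine.
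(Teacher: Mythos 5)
Your proposal is correct and follows essentially the same route as the paper: linearize by freezing the flow of marginals, solve the linearized SDE (using the one-sided bound from \ref{A9} plus local Lipschitz regularity for non-explosion and strong uniqueness, where the paper invokes Stroock--Varadhan existence and Yamada--Watanabe rather than citing monotone-SDE theory directly), and then obtain the law as the unique fixed point of $\Phi$ via the same coupling differential inequality and the kernel bound $\mathbb W_2^2(\mu_t,\tilde\mu_t)\leqslant\int_0^t\mathbb W_2^2(m_s,\tilde m_s)R(t,\dd s)$. Your explicit contraction ratio $\kappa/\rho$, uniform in $T$, is exactly what the paper leaves implicit in its reference to the computations of Proposition~\ref{prop:parallelcoupling}.
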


The proof is postponed to Section~\ref{sec:existenceprocess}.

Next, in order to state the conditions on the domain $\mathcal D$, we need the following lemma. 

\begin{lem}
\label{lem:exist_lambda}
Under \ref{A}, there exists a unique $\lambda \in \R^d$ such that 
\begin{equation}\label{eq:lambda}
a(\lambda)+b(\lambda,\delta_{\lambda})\ =\ 0\,.
\end{equation}
\end{lem}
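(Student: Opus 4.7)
The plan is to settle uniqueness as a direct consequence of \ref{A9} applied to Dirac masses, and then obtain existence via the long-time limit of a dissipative ODE. The monotonicity condition with $\rho>\kappa$ does essentially all the work.

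\textbf{Uniqueness.} Suppose $\lambda_1,\lambda_2\in\R^d$ both solve \eqref{eq:lambda}. Applying \ref{A9} with $z=\lambda_1$, $y=\lambda_2$, $\mu=\delta_{\lambda_1}$, $\nu=\delta_{\lambda_2}$, and using the identity $\mathbb{W}_2^2(\delta_{\lambda_1},\delta_{\lambda_2})=|\lambda_1-\lambda_2|^2$, the left-hand side of the inequality vanishes while the right-hand side equals $-(\rho-\kappa)|\lambda_1-\lambda_2|^2$. Since $\rho>\kappa$, this forces $\lambda_1=\lambda_2$.

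\textbf{Existence.} Define $F:\R^d\to\R^d$ by $F(x):=a(x)+b(x,\delta_x)$. Since $b$ is globally Lipschitz in both arguments by \ref{A8}, and $\mathbb{W}_2(\delta_x,\delta_y)=|x-y|$, the map $x\mapsto b(x,\delta_x)$ is globally Lipschitz; combined with the local Lipschitz continuity of $a$, $F$ is locally Lipschitz. I then consider the ODE $\dot z_t=F(z_t)$ with arbitrary initial condition $z_0\in\R^d$, which admits a local solution by Cauchy--Lipschitz. For any reference point $y_0$, applying \ref{A9} with $\mu=\delta_{z_t}$, $\nu=\delta_{y_0}$ yields
\begin{equation*}
\tfrac{1}{2}\tfrac{\dd}{\dd t}|z_t-y_0|^2 = (z_t-y_0)\cdot(F(z_t)-F(y_0))+(z_t-y_0)\cdot F(y_0) \leq -(\rho-\kappa)|z_t-y_0|^2+|z_t-y_0|\,|F(y_0)|,
\end{equation*}
which keeps $|z_t-y_0|$ uniformly bounded and thus guarantees global existence. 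Applying the same monotonicity estimate to two solutions $z_t,\tilde z_t$ gives $\tfrac{\dd}{\dd t}|z_t-\tilde z_t|^2\leq -2(\rho-\kappa)|z_t-\tilde z_t|^2$, so any two solutions converge exponentially to one another. Taking $\tilde z_t:=z_{t+h}$ for arbitrary $h>0$ shows that $(z_t)_{t\geqslant 0}$ is Cauchy as $t\to+\infty$ and converges to some $\lambda\in\R^d$; by continuity $\dot z_t\to F(\lambda)$, and since $z_t$ itself converges, $F(\lambda)$ must vanish, so $\lambda$ solves \eqref{eq:lambda}.

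\textbf{Main obstacle.} The only real subtlety is the global well-posedness of the ODE, given that \ref{A8} yields only local Lipschitz continuity for $a$: this is precisely what the \emph{a priori} bound above resolves, and it is the single place where the strict monotonicity $\rho>\kappa$ is essentially exploited in the existence step.
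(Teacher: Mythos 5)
Your proof is correct, but it follows a different route from the paper. You exploit directly that the self-consistent drift $F(x)=a(x)+b(x,\delta_x)$ is strongly one-sided dissipative: applying \ref{A9} with $\mu=\delta_z$, $\nu=\delta_y$ gives $(z-y)\cdot(F(z)-F(y))\leqslant -(\rho-\kappa)|z-y|^2$, from which uniqueness is a one-line consequence and existence follows by running the ODE $\dot z=F(z)$ (your a priori bound around a reference point $y_0$ gives both global existence and boundedness of the trajectory, which is what makes the Cauchy argument with $\tilde z_t=z_{t+h}$ uniform in $h$; it would be worth saying explicitly that $\sup_h|z_0-z_h|<\infty$ comes from that bound). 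The paper instead proceeds in two stages via its parallel-coupling result (Proposition~\ref{prop:parallelcoupling}): for each frozen $v$, the flow of $\dot z=a(z)+b(z,\delta_v)$ contracts at the stronger rate $\rho$ (since the Wasserstein term vanishes), hence admits a unique equilibrium $\Pi(v)$; then the same coupling estimate shows $|\Pi(\lambda_1)-\Pi(\lambda_2)|^2\leqslant (\kappa/\rho)|\lambda_1-\lambda_2|^2$, so $\Pi$ is a contraction and its unique fixed point is $\lambda$. Your argument is shorter and more self-contained, and it additionally shows that the zero-noise self-consistent dynamics converges to $\lambda$ from any initial point at rate $\rho-\kappa$; the paper's construction is slightly more indirect but fits its general machinery, since the frozen-measure flow $\psi^v$ and its $\rho$-contraction (and the map $\Pi$) are exactly the objects reused later when coupling the nonlinear process with the linear process driven by $\delta_\lambda$. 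Both arguments hinge on $\rho>\kappa$ in the same essential way.
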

The proof is postponed (see Lemma~\ref{cor:deterministic_flow}). Notice that \eqref{eq:lambda} is equivalent to saying that  the process given by $X_t=\lambda$ for all $t\geqslant 0$  is a constant solution of \eqref{eq:init} in the zero noise case $\sigma=0$. In all the rest of this section, we fix $\lambda$ as given by Lemma~\ref{lem:exist_lambda}.

The basic conditions \customlabel{D}{\A{D}} on the domain $\mathcal D$ are the following.

\begin{trivlist}
\item[\customlabel{D1}{\A{D1}}] The domain $\mathcal{D}\subsetneq\R^d$ is open.
\item[\customlabel{D3}{\A{D2}}] Moreover $a$ is Lipschitz continuous on $\mathcal D_{+1}=\{z\in\R^d,\ d(z,\mathcal D)\leqslant 1\}$.
\item[\customlabel{D4}{\A{D3}}] The initial distribution $m_0^\sigma=m_0$ is independent from $\sigma$ and has a compact support included in a ball centered at $\lambda$ (given in Lemma~\ref{lem:exist_lambda}) included in $\mathcal D$. \end{trivlist}


When $\mathcal D$ is bounded, of course \ref{D3} is implied by \ref{A8}, but there are cases of interest (as in the kinetic case, see Section~\ref{subsec:kinetic}) where $\mathcal D$ is not bounded.

As we will see (see Lemma~\ref{roglic}), \ref{D4} implies that the (deterministic) interacting process at zero temperature stays in $\mathcal D$, which is clearly a basic requirement to get a Kramers' law. Besides, this condition could be slightly weaken, see Remark~\ref{rem:assumptionInitial}.

Finally, we assume that a Kramers' law holds for the linear system~\eqref{roserouge}. In fact, we need a slightly stronger assumption:

\begin{trivlist}
\item[\customlabel{K}{\A{K}}] 
Let $\tilde X^\sigma$ solve \eqref{roserouge} with $\tilde X_0^\sigma \sim \delta_\lambda$. There exist two families of open domains $\left(\mathcal{D}_{i,\xi}\right)_{\xi>0}$ and $\left(\mathcal{D}_{e,\xi}\right)_{\xi>0}$ with $\mathcal{D}_{i,\xi}\subset\mathcal{D}_{i,0}=\mathcal{D}=\mathcal{D}_{e,0}\subset\mathcal{D}_{e,\xi}$ with the following properties. First, 
\[\sup_{z\in\partial\mathcal{D}_{i,\xi}}{\rm d}\left(z\,;\,\mathcal{D}^c\right)+\sup_{z\in\partial\mathcal{D}_{e,\xi}}{\rm d}\left(z\,;\,\mathcal{D}\right) \ \underset{\xi\rightarrow 0}\longrightarrow  0.\]
Second, for all $\xi>0$ small enough  
 \[\inf_{z\in\partial\mathcal{D}_{i,\xi}}{\rm d}\left(z\,;\,\mathcal{D}^c\right)>\xi \text{ and } \inf_{z\in\partial\mathcal{D}_{e,\xi}}{\rm d}\left(z\,;\,\mathcal{D}\right)>\xi.\]  
 Third, let $\tilde \tau_\sigma(\mathcal D_{u,\xi}) = \inf\{t\geqslant 0,\tilde X_t^\sigma\notin\mathcal D_{u,\xi}\}$ for $\xi\geq0$ and $u\in\{e,i\}$.  For all $\xi\geq0$ and  $u\in\{e,i\}$, there exists $H_{u,\xi}>0$ such that 
it holds for all $\delta>0$:
\begin{equation}
\label{eq:Kramertilde}
\mathbb P \po \exp\po \frac{2}{\sigma^2}\po H_{u,\xi} -\delta \pf\pf  \leqslant \tilde \tau_\sigma(\mathcal D_{u,\xi}) \leqslant \exp\po \frac{2}{\sigma^2}\po H_{u,\xi} +\delta \pf \pf \pf \underset{\sigma\rightarrow0}\longrightarrow 1\,.
\end{equation}
Moreover, for $u\in\{i,e\}$, $\lim_{\xi\to0}H_{u,\xi}=H_{u,0}=:H$.
\end{trivlist}

Since \eqref{roserouge} is a standard (time-homogenous) diffusion process, Assumption~\ref{K} can be checked by applying standard Large Deviation results. We presented it as a black-box assumption for clarity (in particular to avoid a discussion on the characteristic boundary in non-elliptic cases) and refer to Section~\ref{Sec:LDP_linear} for more details.

We can now state our main result.

\begin{thm}\label{th:mr}   Under Assumptions  \ref{A}, \ref{D} and \ref{K}, for all $\delta>0$,
\[\mathbb P \po \exp\po \frac{2}{\sigma^2}\po H  -\delta \pf\pf  \leqslant  \tau_\sigma(\mathcal{D}) \leqslant \exp\po \frac{2}{\sigma^2}\po H  +\delta \pf \pf \pf \underset{\sigma\rightarrow0}\longrightarrow 1\,.\]
\end{thm}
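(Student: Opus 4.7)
The plan is to follow the coupling strategy sketched earlier in the text: I run the interacting process $X^{\sigma}$ alongside the linear benchmark $\tilde X^{\sigma}$ of~\eqref{roserouge}, both driven by the \emph{same} Brownian motion with $\tilde X^{\sigma}$ initialized at $\lambda$, show that the path difference $|X_t^\sigma-\tilde X_t^\sigma|$ stays deterministically small at every $t\geq 0$ for small $\sigma$, and then sandwich $\tau_\sigma(\mathcal D)$ between the Kramers' times of $\tilde X^{\sigma}$ on the inner and outer enlargements $\mathcal D_{i,\xi},\mathcal D_{e,\xi}$ provided by Assumption~\ref{K}.

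First I observe that the synchronous coupling cancels the diffusion term, so that \ref{A9} applied with $\mu=\mu_t^\sigma$ and $\nu=\delta_\lambda$ yields
\[\frac{\dd}{\dd t}|X_t^\sigma-\tilde X_t^\sigma|^2 \ \leq\ -2\rho|X_t^\sigma-\tilde X_t^\sigma|^2 + 2\kappa\,\mathbb W_2^2(\mu_t^\sigma,\delta_\lambda),\]
and Gronwall gives the \emph{pathwise, deterministic} estimate
\[|X_t^\sigma-\tilde X_t^\sigma|^2 \ \leq\ e^{-2\rho t}|X_0^\sigma-\lambda|^2 + 2\kappa\int_0^t e^{-2\rho(t-s)}\mathbb W_2^2(\mu_s^\sigma,\delta_\lambda)\,\dd s,\]
whose initial term is uniformly bounded thanks to~\ref{D4}. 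The second step is to prove that $\sup_{s\geq 0}\mathbb W_2(\mu_s^\sigma,\delta_\lambda)\to 0$ as $\sigma\to 0$. I would split this into (i) a $\mathbb W_2$-ergodicity bound $\mathbb W_2(\mu_t^\sigma,\mu_\infty^\sigma)\leq Ce^{-\gamma t}$ with $C,\gamma$ independent of $\sigma$, obtained from a delay Gronwall estimate on $\mathbb E|X_t^\sigma-Y_t^\sigma|^2$ for a stationary copy $Y^\sigma$, exploiting \ref{A9}, \ref{A4} and the contraction margin $\rho-\kappa>0$; and (ii) Freidlin--Wentzell-type concentration $\mathbb W_2(\mu_\infty^\sigma,\delta_\lambda)\to 0$ as $\sigma\to 0$, which uses the uniqueness of the deterministic fixed point from Lemma~\ref{lem:exist_lambda}. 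Feeding (i)-(ii) back into the Gronwall bound yields a deterministic estimate $\sup_{t\geq 0}|X_t^\sigma-\tilde X_t^\sigma|\leq \eta(\sigma)$ with $\eta(\sigma)\to 0$.

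Once this closeness is secured, the rest is geometric. Fix $\xi>0$ and take $\sigma$ small enough that $\eta(\sigma)<\xi$. Then the separation properties of $\mathcal D_{i,\xi}$ and $\mathcal D_{e,\xi}$ in~\ref{K} force, pathwise for every $t\geq 0$,
\[\tilde X_t^\sigma\in\mathcal D_{i,\xi}\ \Longrightarrow\ X_t^\sigma\in\mathcal D \quad\text{and}\quad X_t^\sigma\in\mathcal D\ \Longrightarrow\ \tilde X_t^\sigma\in\mathcal D_{e,\xi},\]
which gives the sandwich $\tilde\tau_\sigma(\mathcal D_{i,\xi})\leq\tau_\sigma(\mathcal D)\leq\tilde\tau_\sigma(\mathcal D_{e,\xi})$. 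Combining with~\eqref{eq:Kramertilde} applied to both enlargements, and then letting first $\sigma\to 0$ and next $\xi\to 0$ so that $H_{i,\xi},H_{e,\xi}\to H$, delivers the announced double inequality up to the arbitrary $\delta>0$.

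I expect the core obstacle to be the second step: proving the quantitative ergodicity of the memory-weighted law $\mu_t^\sigma$ uniformly in $\sigma$, together with the small-noise concentration of its stationary limit. The memory kernel $R(t,\cdot)$ turns the ergodicity into a delay-type problem rather than a Markovian one, and combining long-time $\mathbb W_2$-control with a Freidlin--Wentzell bound uniform in the small parameter is the genuinely technical heart of the argument, whereas the synchronous coupling (step one) and the domain-sandwich translation to $\tilde X^\sigma$ (step three) are comparatively routine once the uniform closeness is in hand.
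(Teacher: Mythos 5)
Your overall architecture (synchronous coupling of $X^\sigma$ with the linear process $\tilde X^\sigma$ of \eqref{roserouge} started at $\lambda$, then a comparison of exit times through the inner and outer domains of \ref{K}) is the same as the paper's, but the pivotal step of your plan fails. You claim that $\sup_{s\geq 0}\mathbb{W}_2(\mu_s^\sigma,\delta_\lambda)\to 0$ as $\sigma\to 0$ and deduce a \emph{deterministic, uniform-in-time} bound $\sup_{t\geq 0}|X_t^\sigma-\tilde X_t^\sigma|\leq \eta(\sigma)\to 0$. This cannot be true in general: under \ref{D4} the initial law $m_0$ is fixed, independent of $\sigma$, with compact support in a ball of some radius $r>0$ around $\lambda$, so $\mathbb{W}_2(\mu_s^\sigma,\delta_\lambda)$ remains of order $\mathbb{W}_2(m_0,\delta_\lambda)$ for $s$ of order one, uniformly in $\sigma$, and your Gr\"onwall estimate only yields $|X_t^\sigma-\tilde X_t^\sigma|\lesssim r$ on any fixed initial time window. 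Since \ref{K} requires $\xi$ small (so that $H_{u,\xi}\to H$), typically $\xi\ll r$, and the pathwise implications ``$\tilde X_t^\sigma\in\mathcal{D}_{i,\xi}\Rightarrow X_t^\sigma\in\mathcal{D}$'' and ``$X_t^\sigma\in\mathcal{D}\Rightarrow \tilde X_t^\sigma\in\mathcal{D}_{e,\xi}$'' cannot hold for \emph{all} $t\geq 0$; hence the sandwich $\tilde\tau_\sigma(\mathcal{D}_{i,\xi})\leq\tau_\sigma(\mathcal{D})\leq\tilde\tau_\sigma(\mathcal{D}_{e,\xi})$ is not available pathwise. Moreover, even at large times the distance between the two trajectories is controlled through the random quantity $|X_T^\sigma-\tilde X_T^\sigma|$, which is small only with high probability (via Markov's inequality from $\mathbb{W}_2(m_T^\sigma,\delta_\lambda)$ and $\mathbb{E}|\tilde X_T^\sigma-\lambda|^2=O(\sigma^2)$), not almost surely, so the exit-time comparison can only be made on an event of probability tending to one.

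What is missing is precisely the treatment of the initial time layer, and it forces the event-based two-step scheme of the paper: one fixes a large $T$, works on the event $\mathcal{A}_{T,\varepsilon}$ that neither process has exited its domain before $T$ and that $|X_T^\sigma-\tilde X_T^\sigma|\leq\varepsilon$; on this event, the contraction of Proposition~\ref{prop:parallelcoupling} combined with the uniform-in-$\sigma$ long-time estimate (Proposition~\ref{prop:longtime}, together with Corollary~\ref{cor:equilibrium} and Proposition~\ref{cor:equilibrium_lownoise}) keeps $\sup_{t\geq T}|X_t^\sigma-\tilde X_t^\sigma|<\xi$ and rules out the wrong ordering of the exit times after $T$. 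One must then prove $\mathbb{P}(\mathcal{A}_{T,\varepsilon})\to 1$, whose delicate piece --- absent from your proposal --- is $\mathbb{P}(\tau_\sigma(\mathcal{D})\leq T)\to 0$; this is obtained by comparing $X^\sigma$ on $[0,T]$ not with $\tilde X^\sigma$ but with the zero-noise nonlinear process $X^0$ (Proposition~\ref{prop:small_time_coupling}), which stays in $\mathcal{D}$ by Lemma~\ref{roglic} thanks to \ref{D4}. Two smaller inaccuracies: with a general memory kernel $R$ the uniform ergodicity is not exponential (for the uniform kernel it is only polynomial), and only a $\sigma$-independent vanishing rate is available; and $\mathbb{W}_2(\mu_\infty^\sigma,\delta_\lambda)=O(\sigma)$ follows from a direct coupling bound, with no need of Freidlin--Wentzell estimates. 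These last two points are harmless, but the initial-condition issue is a genuine gap, and repairing it essentially leads back to the paper's proof.
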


In other words, the Kramers' law holds for the non-linear process \eqref{eq:init}, with the same rate $H$ as the linear process \eqref{roserouge}.

\paragraph{Related works} The existing results devoted to a quantitative measure of the efficiency  of adaptive algorithms are the following: in \cite{LelievreABF} the efficiency is expressed in term of a quantitative long-time convergence speed toward equilibrium (for a process interacting with its law). In \cite{BB,ELM}, for processes interacting with their occupation measures, it is written in term of the asymptotic variance in a CLT. To our knowledge, the only study concerning the exit times of such processes is conducted in \cite{Fortetal}, where the Wang-Landau algorithm is studied for the toy problem of a $3$-states Markov chain (in which case the authors are able to establish that the exit times are sub-exponential in $1/\varepsilon$, which is not the case in our work). 

Besides, concerning more generally the question of exit times for non-linear processes, we already mentioned \cite{EJP,ECP}, where a result similar to Theorem~\ref{th:mr}  is established, but only for the usual elliptic McKean-Vlasov diffusion and in cases where the interaction is convex (and in particular the exit time is \emph{larger} for the interacting process than for the initial dynamics). The general strategy of our proof is in the same spirit as the one of \cite{ECP}.

\paragraph{Organization of the paper} The rest of the paper is organized as follows. We conclude this introduction by a discussion of this result in Section~\ref{subsec:discussion}. Section~\ref{sec:proofs} is devoted to its proof. Examples of applications are provided in Section~\ref{Sec:applications}.

\subsection{First example and discussion}\label{subsec:discussion}

To illustrate Theorem~\ref{th:mr} and fix some ideas, consider the case of the overdamped Langevin process (see Section~\ref{Sec:applications} for other applications). Let $U,W\in\mathcal C^2(\R^d)$ and, for $z\in \R^d$ and $\mu \in \mathcal P_2(\R^d)$,
\[a(z) = -\na U(z) \qquad b(z,\mu) = \int_{\R^d} \na W(z-y) \mu(\dd y)\,.\]
Assume that $U$ is $\rho$-strongly convex for some $\rho>0$, that $W(y)=W(-y)$ for all $y\in \R^d$ and that $\na^2 W$ is bounded. Then $a$ satisfies \eqref{eq:a_contract} (applied with $b=0$)  and, considering $\pi$ a coupling of $\nu$ and $\mu$,
\begin{align*}
|b(z,\nu)-b(y,\mu)| \ &= \ \left|\int_{\R^d\times\R^d} \po \na W(z-v) - \na W(y-w)\pf \pi(\dd v,\dd w)\right| \\
&  \leqslant \ \|\na^2 W\|_{\infty}\po |z-y| +   \int_{\R^d} |v-w|  \pi(\dd v,\dd w) \pf\,. 
\end{align*}
Using the Jensen inequality and taking the infimum over all couplings yields \eqref{pairet} with $\kappa_1=\kappa_2 = \|\na^2 W\|_\infty$. Hence, \ref{A9} holds if $\|\na^2 W\|_\infty < \rho/2$.
 
Denote by $\lambda$ the unique point where $U$ attains its minimum. For $v\in\R^d$,
\[a(\cdot) + b(\cdot,\delta_{v}) = -\na U_{v}\]
with $U_v = U- W(\cdot - v)$. Remark that $\na^2 U_v \geqslant \rho/2>0$, so that $U_v$ is convex, and moreover $\na U_\lambda(\lambda) = 0$. It means that, in Theorem~\ref{th:mr}, the linear process \eqref{roserouge} reads
\[\dd Z_t \ = \ -\na U_\lambda(Z_t) + \sqrt{2}\sigma \dd B_t\,.\]
Take $\mathcal D=\mathbb B(\lambda,r)$ for some $r>0$, so that \ref{D} holds for all initial distribution with support included in $\mathcal D$. Setting $\mathcal D_{e,\xi} = \mathbb B(\lambda,r+\xi)$ and $\mathcal D_{i,\xi}=\mathbb B(\lambda,r-\xi)$ for $\xi<r$, \ref{K} holds with 
\[H_{u,\xi} = \inf_{x\in\partial\mathcal D_{u,\xi}} U_{\lambda}(x)-U(\lambda)\,,\qquad H=\inf_{x\in\partial\mathcal D} U_{\lambda}(x)-U(\lambda)\,,\]
see Section~\ref{Sec:LDP_linear}.
Similarly, the rate for the initial linear process~\eqref{linear_convex} is 
\[L=\inf_{x\in\partial\mathcal D} U(x)-U(\lambda)\,.\]
As a consequence, as announced, our assumptions allow for situations where $H<L$, which is here the case as soon as $\inf_{x\in\partial \mathcal D} W(x-\lambda)>0$.

For instance, if $U(z) = \rho/2|z-\lambda|^2$ and $W(z)=\alpha|z|^2$, then  \ref{A9} holds whenever $\alpha <\rho/4$, and $U_\lambda = (\rho/2-\alpha)|z-\lambda|^2$.  In that case, $L=\rho/2 r^2$ and $H=(\rho/2-\alpha) r^2$, which can be made arbitrarily close to $L/2$ by taking $\alpha$ arbitrarily close to $\rho/4$.

\begin{rem}
\label{AG2R}
More generally, we do not need to assume that $z\mapsto b(z,\delta_\lambda)$ derives from a potential. Considering the variational definition of $H$ (see   Section~\ref{Sec:LDP_linear}), we can see  that the hypothesis $\left\langle b(z,\delta_\lambda);b(z,\delta_\lambda)-4\nabla U(z)\right\rangle<0$ for any $z\in\bRb^d$ is sufficient to ensure that the exit-cost is reduced. The idea is to consider the optimal trajectory for the linear process \eqref{linear_convex} and to show that, with the interaction, the cost of this trajectory gets strictly smaller than $L$.
\end{rem}

This example calls for a few remarks, in particular in view of the initial algorithmic motivations discussed in Section~\ref{sec:motivations}. Here we assumed that $U$ is convex, which means that in fact the non-perturbed overdamped Langevin process is not metastable and the importance sampling scheme presented in  Section~\ref{sec:motivations} is not relevant. More generally, we see that the global contraction property \ref{A9} is very strong and rules out any interesting practical case for the adaptive algorithms. In fact, Theorem~\ref{th:mr} has to be understood as a first step toward proving a similar result where the convexity of $U$ is only assumed locally on a neighborhood of $\lambda$. Indeed, if the initial distribution of the process is supported on a neighborhood of $\lambda$ then the probability that the process exits in a time smaller than   $e^{2(H-\delta)/\sigma^2}$ for some $\delta>0$ will be small (and so will be the mass of $m_t^\sigma$ far from $\lambda$) which means that only the local behavior of $a$ is relevant up to these times.

In other words, with Theorem~\ref{th:mr}, we do not really prove that the metastability is reduced for a multi-modal probability target, which is the final goal. We only prove that, for a process attracted to a single point, the time to exit from a ball centered at the attractor can be reduced by adding a repelling interaction potential. This is still a new and far from trivial result and, as mentioned above, it should be possible to combine it with a localization procedure in order to treat a non-convex case. This will be the topic of a future work. More precisely, we expect the localization argument to enable the study of the case where $U$ is not convex but  $\mathcal D$ is still a domain on which $U$ is convex. Going beyond this assumption will raise additional difficulties.

Let us make another remark on the local character of our result. In Theorem~\ref{th:mr}, the initial condition   $m_0$ is   concentrated on the domain $\mathcal D$, which in the example of this section is a ball centered at the minimum of $U$. Now, consider the practical situation of  a system of  $N$ interacting particles in a non-convex potential $U$. It is possible that, at some point, half the particles are in some well of $U$, and the other half is in another one. This is precisely why it is more natural to use an adaptive algorithm rather than a  biasing potential $W(\cdot-\lambda)$ for a fixed $\lambda$. Using an interaction potential $W(y)=\alpha|y|^2$ would have the effect that the particles in the first well are repelled by the particles in the second one, even if the two wells are far away one from the other. Yet, there is no particular reason to believe that an efficient way to escape from the first well is to go in the direction opposite to the second one. In fact, in practice, localized repelling interactions are used, like $W(y)=\alpha\exp(-\beta|y|^2)$, as in the metadynamics algorithm \cite{Metadyn1,Metadyn2,Metadyn3}. It means that the effect of the particles in a given well on those in another well is negligible. In that case, Theorem~\ref{th:mr} can be understood as a simplification of the problem, where each well is treated separately (which is reasonable and could possibly be made rigorous for domains $\mathcal D$ such that the exit-time is small with respect to the transition time from one well to another). 
 A statement about a general situation with a non-localized interaction would be harder to interpret that the simple situation of a single well as in Theorem~\ref{th:mr}, which is why we focus on the latter in this work. 
 
 To be more precise, notice that, in a convex potential, even if a particle exits at some point from a ball centered at the stable point, it will fall back shortly after, since there is nowhere else to go. The mass around the stable point is thus constant. This is not at all the case if a particle which exits $\mathcal D$ falls in the basin of attraction of a different stable point: there is a mass leakage, which may diminish the strength of the repulsion, and thus slow down the exit of the remaining particles. There may be cases where the proof of Theorem~\ref{th:mr} still works and yields a Kramers'law similar to the linear process with interaction $b(\cdot, p\delta_{\lambda_1}+(1-p)\delta_{\lambda_2})$ where $\lambda_1$ and $\lambda_2$ are the local minima in each well and $p$ is the initial proportion of particles in the first well, but such a result can only hold if the transition from one well to another happens at a time much larger than the exit of $\mathcal D$ (which can be for instance the union of two balls centered at $\lambda_1$ and $\lambda_2$), in which case all particles will typically exit $\mathcal D$ before any transition from one well to the other is observed. From the point of view of numerical acceleration (where one indeed wants to accelerate the transitions), this is still not the interesting framework. However, our strategy of proof could lead, in the case where exits of $\mathcal D$ correspond to transitions between different metastable states, to the following weaker result:
 \[\forall \delta>0\,, \qquad \liminf_{\sigma\rightarrow0}\mathbb P\po   \tau_\sigma(\mathcal{D}) < e^{(H+\delta)/\sigma^2}\pf \ > \ 0\,,\]
 where $H$ is the rate of an explicit linear process (depending on the initial law).

 For instance, let us consider a toy problem that gives a simplified picture of a system in $\R^d$ with two minima where the first particles to cross slow down the others, which gives a behaviour different than a simple Kramers' law with a modified height. Consider the Markov chain on $\{0,1\}$ with rates $\lambda(i\rightarrow j)=e^{-a_{ij}/\sigma^2}$ for some $a_{01},a_{10}>0$, with $\mathcal D= \{0\}$ and the initial distribution $m_0=\delta_0$. Then $\tau_\sigma(\mathcal{D})$ is a geometric law with parameter $e^{-a_{01}/\sigma^2}$. Now, add some self-repulsion by considering the chain with rates $\lambda(i\rightarrow j)=e^{-b_{ij}(m_t^\sigma)/\sigma^2}$ where $b_{ij}(\nu)=a_{ij}+\alpha (\nu(j)-\nu(i))$ for some $\alpha>0$. Then the law of $\tau_\sigma(\mathcal{D})$ is given by
\[\mathbb P\po \tau_\sigma(\mathcal{D}) > t \pf \ = \ \exp \po - \int_0^t e^{(\alpha(2x_s-1)-a_{01})/\sigma^2} \dd s\pf \]
where $x_t = m_t^\sigma(0)$ solves
\[\dot x_t \ = \ - e^{(\alpha(2x_t-1)-a_{10})/\sigma^2} x_t + e^{(\alpha(1-2x_t)-a_{01})/\sigma^2}(1-x_t)\,.\]
Suppose to fix ideas that $a_{01}=a_{10}=a$, so that $x_t \rightarrow 1/2$ as $t\rightarrow +\infty$ and $x_t\geqslant 1/2$ for all $t\geqslant 0$. Suppose also that $\alpha<a$. Then it is not difficult to see  that for all $H\in [a-\alpha,a]$ and all $\delta>0$,
\[\liminf_{\sigma\rightarrow0}\mathbb P\po e^{(H-\delta)/\sigma^2} < \tau_\sigma(\mathcal{D}) < e^{(H+\delta)/\sigma^2}\pf \ > \ 0\,.\]

\section{Proofs} 
 \label{sec:proofs}
 
In order to highlight the main steps of  the proof of Theorem~\ref{th:mr}, this section is organised as follows. First, we present in Section~\ref{sec:couplings} a general result based on the parallel coupling of two diffusion processes, which will be  intensively used in all the rest of the proof. The existence and uniqueness of the process \eqref{eq:init} is addressed in Section~\ref{sec:existenceprocess}. The long-time convergence toward equilibrium (at a speed that does not depend on the temperature) is established in Section~\ref{sec:longtime}, while the low noise asymptotics (both at equilibrium and in finite time intervals) is analysed in Section~\ref{sec:lownoise}. Finally, building upon all these intermediary results, the proof of Theorem~\ref{th:mr} is given in Section~\ref{sec:finalproof}.

\subsection{Parallel coupling}\label{sec:couplings}

We consider in this section a general framework. First, let $a\in\mathcal C^1(\R^d,\R^d)$,  $b\in \mathcal C^0(\R^d\times \mathcal P_2(\R^d),\R^d)$, $M\in\R^{d\otimes d}$ and $(B_t)_{t\geqslant 0}$ be a standard Brownian motion on $\R^d$. In all Section~\ref{sec:couplings}, these parameters are fixed.

Second, let $\sigma \geqslant 0$,  $m_0 \in \mathcal P_2(\R^d)$, $Y_0 \sim m_0$ and let $\nu = (\nu_t)_{t\geqslant 0}\in \mathcal C^0(\R_+,\mathcal P_2(\R^d))$. We say that $(Z_t)_{t\geqslant 0}$ is a process associated to $Y_0$, $\sigma$ and $\nu$ if, almost surely, for all $t\geqslant 0$,
\begin{equation}\label{eq:Zgeneral}
Z_t = Y_0 + \sigma M B_t + \int_0^t  \po a(Z_s) + b(Z_s,\nu_s) \pf \dd s.
\end{equation}

\begin{prop}
\label{prop:parallelcoupling}
Assume \ref{A8} and \ref{A9}. Let $\sigma,\tilde\sigma\geqslant 0$, $m_0,\tilde m_0\in \mathcal P_2(\R^d)$, $Y_0\sim m_0$, $\tilde Y_0\sim \tilde m_0$, $\nu,\tilde \nu\in \mathcal C^0(\R_+,\mathcal P_2(\R^d))$.
Let $(Z_t)_{t\geqslant 0}$ and $(\tilde Z_t)_{t\geqslant 0}$ be  processes associated respectively to $\sigma,Y_0,\nu$ and $\tilde \sigma,\tilde Y_0,\tilde\nu$, in the sense of \eqref{eq:Zgeneral}. Set $\Delta Z_t = Z_t - \tilde Z_t$ and $f(t) = \mathbb E \po |\Delta Z_t|^2 \pf$.
\begin{enumerate}
\item If $\sigma = \tilde \sigma$, then almost surely, $t\mapsto |\Delta Z_t|^2$ is $\mathcal C^1$ with for all $t \geqslant 0$,
\[\partial_t |\Delta Z_t|^2 \ \leqslant \   -  2\rho   | \Delta Z_t|^2 +2  \kappa\mathbb W_2^2(\nu_t,\tilde\nu_t)\,.\]
\item If $\tilde \sigma = 0$ then $f$ is $\mathcal C^1$ with for all $t\geqslant 0$,  
\[f'(t)  \leqslant d \sigma^2   \|M\|^2  -  2\rho  f(t) +   2\kappa \mathbb W_2^2(\nu_t,\tilde\nu_t)\,.\]
\end{enumerate}
\end{prop}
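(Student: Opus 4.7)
The two parts share the same underlying idea of a synchronous coupling: both $Z$ and $\tilde Z$ are driven by the same Brownian motion $(B_t)$, so in the difference $\Delta Z_t = Z_t - \tilde Z_t$ the noise terms combine nicely. The contraction hypothesis \ref{A9} is precisely designed to control the resulting drift difference by the squared norm of the displacement plus a Wasserstein term.

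For part (1) with $\sigma=\tilde\sigma$, the plan is to observe that the stochastic integrals $\sigma M B_t$ cancel exactly, so pathwise
\[
\Delta Z_t \;=\; Y_0-\tilde Y_0 + \int_0^t \bigl[a(Z_s)+b(Z_s,\nu_s)-a(\tilde Z_s)-b(\tilde Z_s,\tilde\nu_s)\bigr]\,\dd s.
\]
Since $Z$ and $\tilde Z$ have continuous sample paths and the integrand is continuous in $s$ (using the continuity of $a$, $b$ and of $s\mapsto\nu_s,\tilde\nu_s$ in $\mathbb W_2$ combined with the Lipschitz bound on $b$), the map $t\mapsto\Delta Z_t$ is almost surely $\mathcal C^1$. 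Differentiating $|\Delta Z_t|^2$ and inserting the identity for $\partial_t\Delta Z_t$ gives
\[
\tfrac12\partial_t|\Delta Z_t|^2 \;=\; (Z_t-\tilde Z_t)\cdot\bigl[a(Z_t)+b(Z_t,\nu_t)-a(\tilde Z_t)-b(\tilde Z_t,\tilde\nu_t)\bigr],
\]
and the contraction estimate \eqref{eq:a_contract} applied pointwise in $\omega$ produces exactly the claimed bound.

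For part (2) with $\tilde\sigma=0$, only $Z$ carries a Brownian increment, so the cancellation is partial. I will apply It\^o's formula to $|\Delta Z_t|^2$, yielding
\[
\dd|\Delta Z_t|^2 \;=\; 2\,\Delta Z_t\cdot\bigl[a(Z_t)+b(Z_t,\nu_t)-a(\tilde Z_t)-b(\tilde Z_t,\tilde\nu_t)\bigr]\dd t + 2\sigma\,\Delta Z_t\cdot M\,\dd B_t + \sigma^2\,\mathrm{tr}(MM^\top)\,\dd t.
\]
Taking expectations kills the stochastic integral (after a standard localization by the stopping times $\tau_n=\inf\{t:|Z_t|+|\tilde Z_t|\geq n\}$ and a monotone/dominated convergence pass), the contraction \eqref{eq:a_contract} bounds the drift term by $-2\rho f(t)+2\kappa\mathbb W_2^2(\nu_t,\tilde\nu_t)$ since $\nu_t,\tilde\nu_t$ are deterministic, and finally one uses $\mathrm{tr}(MM^\top)\leqslant d\|M\|^2$ to arrive at the advertised inequality. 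The $\mathcal C^1$ regularity of $f$ then follows from the fact that $f'$ is expressed as an expectation of continuous, uniformly bounded quantities on compact time intervals.

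The only delicate point is the localization/integrability step in part (2): to legitimately remove the expectation of the stochastic integral and to obtain a bona fide derivative of $f$, one needs an a priori second-moment bound on $Z_t$ (and $\tilde Z_t$) locally uniform in $t$. This is obtained by applying the same It\^o computation to $|Z_t|^2$ using the one-sided dissipativity consequence of \ref{A9} (take $y=0$ and $\nu=\nu_t$, $\mu=\delta_0$), which yields a Gronwall-type bound on $\mathbb E|Z_t|^2$ from the finite second moment of $m_0$. Once this bound is in place, dominated convergence justifies passing $\tau_n\to\infty$ and the proposition follows.
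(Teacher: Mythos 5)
Your proof is correct and follows essentially the same route as the paper: synchronous coupling so the noise cancels exactly in part (1), then a direct differentiation plus \ref{A9}, and It\^o's formula with expectation, the trace bound $\mathrm{tr}(MM^\top)\leqslant d\|M\|^2$ and the same contraction estimate for part (2). The extra care you take with localization and a priori second-moment bounds is a legitimate filling-in of details the paper leaves implicit, not a different argument.
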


\begin{proof}
If $\sigma = \tilde\sigma$, 
\[\dd \Delta Z_t \ = \ \po a(Z_t) + b(Z_t,\nu_t)- a(\tilde Z_t)-b(\tilde Z_t,\tilde \nu_t)\pf \dd t \,.\]
The right-hand side is continuous, and \ref{A9} yields
\begin{eqnarray*}
 \dd   |\Delta Z_t|^2& = & 2 \Delta Z_t \cdot  \po a(Z_t) + b(Z_t,\nu_t)- a(\tilde Z_t)-b(\tilde Z_t,\tilde \nu_t)\pf\dd t\\
& \leqslant & - 2 \rho | \Delta Z_t|^2\dd t   + 2 \kappa \mathbb W_2^2(\nu_t,\tilde \nu_t)\dd t .
\end{eqnarray*}
Using Ito's formula, the second point follows the same line.
\end{proof}

Let us now describe a first consequence  of this general result in the deterministic case where $\sigma=\tilde\sigma =0$ and $\nu$ and $\tilde \nu$ are constant Dirac masses.

\begin{lem}
\label{cor:deterministic_flow}
Under \ref{A},
\begin{enumerate}
\item for all $v \in \R^d$ and $z\in\R^d$, the solution of   the Cauchy problem $\dot z_t = a(z_t)+b(z_t,\delta_{v})$ with $z_0=z$ is defined for all positive times. We denote    by $(\psi_t^v)_{t\geqslant 0}$ the associated flow (so that $z_t = \psi_t^v(z)$).
\item for all  $v\in \R^d$, $z,y\in \R^d$ and $t\geqslant 0$,
\[|\psi_t^v(z)-\psi_t^v(y)| \ \leqslant\   e^{-\rho t}|z-y|\,.\]
\item there exists a unique $\lambda \in \R^d$ such that $\psi_t^\lambda(\lambda)=\lambda$ for all $t\geqslant 0$.
\end{enumerate}
\end{lem}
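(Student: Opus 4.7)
The proof hinges on Proposition~\ref{prop:parallelcoupling} (deterministic case) combined with the one-sided Lipschitz condition \ref{A9}, which yields all three assertions with little extra work.

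\emph{Parts 1 and 2.} For fixed $v\in\R^d$, the map $z\mapsto a(z)+b(z,\delta_v)$ is locally Lipschitz by \ref{A8}, so Cauchy--Lipschitz produces a unique maximal solution $(z_t)_{t\in[0,T)}$. Applying \ref{A9} with $y=0$ and $\mu=\nu=\delta_v$ gives
\[\tfrac12\tfrac{d}{dt}|z_t|^2\leqslant -\rho|z_t|^2+|z_t|\,|a(0)+b(0,\delta_v)|,\]
so $|z_t|\leqslant |z_0|+|a(0)+b(0,\delta_v)|/\rho$ throughout $[0,T)$, ruling out blow-up and proving Part~1. For Part~2 I apply Proposition~\ref{prop:parallelcoupling}(1) with $\sigma=\tilde\sigma=0$, initial data $z$ and $y$, and the constant continuous paths $\nu_t=\tilde\nu_t=\delta_v$ (so $\mathbb W_2(\nu_t,\tilde\nu_t)=0$); the proposition yields $\partial_t|\psi_t^v(z)-\psi_t^v(y)|^2\leqslant -2\rho|\psi_t^v(z)-\psi_t^v(y)|^2$, and Gronwall concludes.

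\emph{Part 3.} I will construct a map $g:\R^d\to\R^d$ sending $v$ to the unique equilibrium of the flow $\psi^v$ and show $g$ is a strict contraction. Fixing $v$ and starting the flow from $0$, Part~1 gives $\sup_{s\geqslant 0}|\psi_s^v(0)|\leqslant R_v$, and Part~2 applied to the trajectories issued from $\psi_s^v(0)$ and from $0$ yields
\[|\psi_{t+s}^v(0)-\psi_t^v(0)|=|\psi_t^v(\psi_s^v(0))-\psi_t^v(0)|\leqslant R_v\,e^{-\rho t}\]
uniformly in $s$, so $t\mapsto\psi_t^v(0)$ is Cauchy and converges to some $g(v)\in\R^d$. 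Continuity of $\psi_t^v$ (Part~2 again) then gives $\psi_t^v(g(v))=g(v)$ for all $t\geqslant 0$, i.e.\ $a(g(v))+b(g(v),\delta_v)=0$. Applying \ref{A9} with $z=g(v_1)$, $y=g(v_2)$, $\mu=\delta_{v_1}$, $\nu=\delta_{v_2}$, the left-hand side vanishes by definition of $g$, leaving
\[0\leqslant -\rho|g(v_1)-g(v_2)|^2+\kappa|v_1-v_2|^2,\]
hence $|g(v_1)-g(v_2)|\leqslant\sqrt{\kappa/\rho}\,|v_1-v_2|$ with $\sqrt{\kappa/\rho}<1$ by \ref{A9}. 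Banach's fixed point theorem produces a unique $\lambda\in\R^d$ with $g(\lambda)=\lambda$, which is equivalent to $a(\lambda)+b(\lambda,\delta_\lambda)=0$, i.e.\ $\psi_t^\lambda(\lambda)=\lambda$ for all $t\geqslant 0$; uniqueness is inherited from that of the fixed point of $g$, since any constant solution $\lambda'$ necessarily satisfies $\lambda'=g(\lambda')$.

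\emph{Main obstacle.} No step is genuinely hard; the only mildly delicate point is building the limit $g(v)$, which reduces to combining the uniform bound of Part~1 with the contraction of Part~2 via the semigroup identity $\psi^v_{t+s}=\psi^v_t\circ\psi^v_s$.
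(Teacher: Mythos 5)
Your proof is correct and follows essentially the same route as the paper: Parts 1--2 are identical, and for Part 3 both arguments introduce the map sending $v$ to the unique equilibrium of $\psi^v$, show it is a $\sqrt{\kappa/\rho}$-contraction (using $\rho>\kappa$), and conclude by Banach's fixed point theorem. The only cosmetic difference is that you construct the equilibrium explicitly as the long-time limit of $\psi_t^v(0)$, whereas the paper deduces its existence from the exponential contraction of the flow and obtains the contraction estimate by applying Proposition~\ref{prop:parallelcoupling} to the two constant trajectories, which amounts to the same computation as your direct use of \ref{A9} at the equilibria.
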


\begin{proof}
Using \ref{A9}, for $v\in\R^d$, a solution of $\dot z_t = a(z_t)+b(z_t,\delta_v)$ is such that
\[\partial \po |z_t|^2\pf \leqslant -2\rho  |z_t|^2 + 2 |z_t| \po |a(0)| + |b(0,\delta_v)|\pf \,,\] 
which by Gr{\"o}nwall's Lemma implies non-explosion, hence the flow is defined for all positive times.

Let $z,y,v\in\R^d$. Consider the settings of Proposition~\ref{prop:parallelcoupling} with $\sigma=\tilde\sigma=0$,  $\nu_t=\tilde\nu_t = \delta_v$ for all $t\geqslant 0$, $Y_0=z$ and $\tilde Y_0 = y$. Write $g(t) = |\psi_t^v (z) - \psi_t^v(y)|^2 $ for $t\geqslant 0$.
Using that $\mathbb W_2(\nu_s,\tilde\nu_s)=0$ for all $s\geqslant 0$ in that case, we immediately get from the first point of Proposition~\ref{prop:parallelcoupling}  that $g'(t) \leqslant - 2\rho  g(t)$ for all $t\geqslant 0$, which proves the second point.  Moreover, this implies that, for all $v\in \R^d$, the flow $\psi^v$    admits a unique equilibrium, that we denote $\Pi(v)$.
 
 Fix $\lambda_1,\lambda_2 \in \R^d$ and set $Z_t = \Pi(\lambda_1)$ and $\tilde Z_t=\Pi(\lambda_2)$ for all $t\geqslant 0$. Then $Z$ (resp. $\tilde Z$) solves \eqref{eq:Zgeneral} with $\sigma=0$, $Y_0 = \Pi(\lambda_1)$  (resp. $\Pi(\lambda_2)$) and $\nu_t =\delta_{\lambda_1}$ (resp. $\delta_{\lambda_2}$)  for all $t\geqslant0$. The first point of Proposition~\ref{prop:parallelcoupling} then reads
\[ |\Pi(\lambda_1)-\Pi(\lambda_2)|^2 \ \leqslant \ \frac{\kappa}{\rho} |\lambda_1-\lambda_2|^2 \,. \]
The condition $\rho>\kappa$  implies that $\Pi$ is a contraction. As a consequence, it admits a unique fixed point $\lambda$. 
\end{proof}
 
\subsection{Existence of the process}\label{sec:existenceprocess}

This section is devoted to the proof of Proposition~\ref{prop:gen:wp}. Before entering the proof, let us state the following crucial Lemma.
\begin{lem}\label{lem:W2noyau}
For  $t\geqslant0$, let $R(t,\cdot)$ be a probability measure on $[0,t]$ and,  for $i=1,2$, let $(m_{s,i})_{s\in[0,t]}$ be a family of probability measures in $\mathcal P_2(\R^d)$ and $\mu_{t,i} = \int_0^t m_{s,i}  R(t,\dd s)$. Then
\[\mathbb W_2^2\po \mu_{t,1},\mu_{t,2}\pf \leqslant \int_0^t \mathbb W_2^2\po m_{s,1},m_{s,2}\pf  R(t,\dd s)\]
\end{lem}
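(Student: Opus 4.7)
The plan is to construct an explicit coupling of $\mu_{t,1}$ and $\mu_{t,2}$ by mixing optimal couplings of the components $m_{s,1}$ and $m_{s,2}$ with weight $R(t,\dd s)$, and to check that its quadratic cost realizes the right-hand side of the claimed inequality.

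More precisely, I would fix $t\geqslant 0$ and, for every $s\in[0,t]$, pick an optimal coupling $\pi_s\in\mathcal P_2(\R^d\times\R^d)$ of $(m_{s,1},m_{s,2})$, i.e. such that
\[
\int_{\R^d\times\R^d} |x-y|^2\,\pi_s(\dd x,\dd y) \ =\ \mathbb W_2^2(m_{s,1},m_{s,2}).
\]
Then I would define
\[
\Pi(\dd x,\dd y) \ :=\ \int_0^t \pi_s(\dd x,\dd y)\, R(t,\dd s),
\]
and check that $\Pi$ is a coupling of $\mu_{t,1}$ and $\mu_{t,2}$: taking the marginal in the second (resp. first) variable amounts to replacing $\pi_s(\dd x,\dd y)$ by $m_{s,1}(\dd x)$ (resp. $m_{s,2}(\dd y)$), and the definition of $\mu_{t,i}$ does the rest. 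Bounding $\mathbb W_2^2(\mu_{t,1},\mu_{t,2})$ by the quadratic cost of $\Pi$ and applying Fubini gives exactly
\[
\mathbb W_2^2(\mu_{t,1},\mu_{t,2})\ \leqslant\ \int_{\R^d\times\R^d}|x-y|^2\,\Pi(\dd x,\dd y)\ =\ \int_0^t \mathbb W_2^2(m_{s,1},m_{s,2})\,R(t,\dd s).
\]

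The only subtle point, and the one I expect to be the main obstacle, is the measurability of the map $s\mapsto \pi_s$, which is needed to make the mixture integral $\int_0^t \pi_s\,R(t,\dd s)$ well-defined and to apply Fubini. This is a standard measurable selection issue in optimal transport: one can invoke a measurable selection theorem (e.g.\ the Kuratowski--Ryll-Nardzewski theorem applied to the compact-valued, upper hemicontinuous correspondence $s\mapsto \Gamma_o(m_{s,1},m_{s,2})$ of optimal couplings), using the continuity of $s\mapsto m_{s,i}$ in $\mathbb W_2$ that is implicit in the hypotheses of the surrounding setting, or simply the classical fact that optimal couplings can be chosen in a Borel-measurable way when $s\mapsto(m_{s,1},m_{s,2})$ is Borel-measurable into $\mathcal P_2(\R^d)^2$. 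Once this selection is made, the verification of the marginals and the Fubini computation are routine.
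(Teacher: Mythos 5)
Your proof is correct and is essentially the paper's argument: the paper builds the same mixture coupling probabilistically, by drawing $S\sim R(t,\cdot)$ and then, conditionally on $S$, an optimal $\mathbb W_2$-coupling $(Y_1,Y_2)$ of $(m_{S,1},m_{S,2})$, which is exactly your measure $\Pi=\int_0^t\pi_s\,R(t,\dd s)$. Your extra remark on the measurable selection of $s\mapsto\pi_s$ is a valid point of rigor that the paper leaves implicit; otherwise the two arguments coincide.
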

\begin{proof}
Let $S$ be a random variable distributed according to $R(t,\cdot)$. Conditionally to $S$, let $(Y_1,Y_2)$ be an optimal $\mathbb W_2$-coupling of $m_{S,1}$ and $m_{S,2}$. Then $(Y_1,Y_2)$ is a coupling of $\mu_{t,1}$ and $\mu_{t,2}$, so that
\[\mathbb W_2^2\po \mu_{t,1},\mu_{t,2}\pf \  \leqslant \ \mathbb E \po |Y_{1}-Y_2|^2\pf \ = \ \int_0^t \mathbb W_2^2 \po m_{s,1},m_{s,2}\pf R(t,\dd  s)\,.\]
\end{proof}

\begin{proof}[Proof of Proposition \ref{prop:gen:wp}]
Let $T<\infty$.
Denote first  a linearized version of the process \eqref{eq:init}, where the dependence of the law in the drift is replaced by  the family  $\bar{m}\in \mathcal C([0,T], \mathcal P_2(\R^d))$, by $\bar{X}$: 
\begin{equation}
\label{eq:linearisedX}
 d\bar{X}_t= a(\bar{X}_t) \dd t+ \bar{b}(t, \bar{X}_t)\dd t + \sigma M \dd B_t   
\end{equation}
where for $(t,x) \in \R_+ \times \R^d$ we set
$$\bar{b}(t, x)= b\po x, \int_0^t \bar{m}_s R(t,ds)\pf.$$

From \ref{A8} and \ref{A9} one easily has for any $t\leq T$ and $x\in\R^d$
$$ x \cdot (a(x) + \bar{b}(t,x)) \leq -\rho |x|^2 + x \cdot (a(0)+ \bar{b}(t,0)) .$$
Hence, on the  one hand,  if $|x|$ is large enough then
$$x \cdot (a(x) + \bar{b}(t,x)) \leq 0.$$
On the other hand,  since,  by \ref{A},
$$|\bar{b} (t,0)|\leq |b (0,\delta_0)|+ C \mathbb{W}_2\po \int_0^t \bar{m}_s R(t,ds)), \delta_0 \pf   \leq |b (0,\delta_0)|+C  \sup_{s\leq T }  \po \int_{\R^d} |y|^2 \bar{m}_s (dy) \pf^{1/2}  $$
for some $C>0$, we get that  for $|x|$ small,
$$ x \cdot 
(a(x) + \bar{b}(t,x)) \leq  |x|  \left(|a(0)|+ |b (0,\delta_0)|+C  \sup_{s\leq T }  \po \int_{\R^d} |y|^2 \bar{m}_s (dy) \pf^{1/2}\right). $$

Thus, we can apply \cite[Thm. 10.2.2, p. 255]{SV} and conclude the existence of a weak solution to the SDE in \eqref{eq:linearisedX}. As the coefficients in \eqref{eq:linearisedX} are locally Lipschitz continuous by \ref{A}, this SDE admits strong uniqueness. Hence, by Yamada and Watanabe principle, \eqref{eq:linearisedX}  admits a unique strong solution up to any time horizon $T>0$.

Now,  for $m\in \mathcal C([0,T], \mathcal P_2(\R^d))$ define $\Phi$  by $\Phi(m) = \big(\mathcal L(X_t^{\sigma,m})\big)_{t \in [0,T]}$, where $X^{\sigma,m}$ denotes the solution of \eqref{eq:init} on $[0,T]$ parametrized by $\int_0^t m_s dR_t(s)$. Note that,  from the computations done in the proof of Proposition \ref{prop:parallelcoupling}, $\Phi$ maps  $\mathcal C([0,T], \mathcal P_2(\R^d))$ onto itself. Still from the quoted proof, one gets that 
$\Phi$ is a contraction so that it admits a unique fixed point $m$ that satisfies that the  family of one-dimensional time marginal laws of the process $X^{\sigma,m}$ are $m$.  Hence, we constructed a solution the non linear SDE \eqref{eq:init}. This concludes the proof on $[0,T]$ for any $T>0$ and so on $\R^+$.
\end{proof}

\subsection{Long-time behavior}\label{sec:longtime}

We start by proving a Gr\"onwall-type lemma in the presence of a memory kernel $R$.

\begin{lem}
\label{lem:Gronwall_memory}
Let $R$ satisfy \ref{A4} and $\alpha,\beta\geqslant0$ with $\alpha>\beta$. There exists a decreasing $x:\R_+\rightarrow \R_+$ with $x(t)\rightarrow 0$ as $t\rightarrow +\infty$ such that, for all $f\in\mathcal C^1(\R_+,\R_+)$ and $\gamma\geqslant0$ such that for all $t\geqslant0,$
\[f'(t) \ \leqslant \ - \alpha f(t)+\beta\int_0^tf(s)R(t,\dd s) + \gamma\,,\]
 then for all $t\geqslant 0$
 \[f(t) \ \leqslant \ \frac{\gamma}{\alpha-\beta} + x(t) \po f(0) - \frac{\gamma}{\alpha-\beta}\pf_+\,.\]
\end{lem}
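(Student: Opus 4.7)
First I would translate to the homogeneous case. Setting $g(t) := f(t) - \gamma/(\alpha-\beta)$, a direct substitution shows that $g \in \mathcal{C}^1(\R_+,\R)$ satisfies
\[g'(t) \ \leqslant \ -\alpha g(t) + \beta \int_0^t g(s)\, R(t,\dd s),\]
the constant $\gamma$ being cancelled exactly by the definition of the shift. The target inequality then reduces to $g(t) \leqslant x(t)\, g(0)_+$ for some decreasing $x:\R_+\to\R_+$ depending only on $(\alpha,\beta,R)$ with $x(t)\to 0$.

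The next step is a uniform bound. Writing $M(T):=\sup_{s\in[0,T]} g_+(s)$, the crude bound $\int_0^t g(s)\,R(t,\dd s) \leqslant M(t)$ fed into the inequality and integrated by variation of constants yields $g(t)\leqslant g(0)e^{-\alpha t} + (\beta/\alpha) M(t)$. Taking positive parts (via the elementary $(a+b)_+\leqslant a_+ + b$ when $b\geqslant 0$) and then the supremum over $s\in[0,T]$ gives $M(T)\leqslant g(0)_+ + (\beta/\alpha)\,M(T)$, i.e.\ $M(T)\leqslant K g(0)_+$ with $K:=\alpha/(\alpha-\beta)$. In particular, if $g(0)\leqslant 0$ then $g\leqslant 0$ everywhere, which already disposes of the case $g(0)_+=0$.

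For the decay, I would exploit the tail property from \ref{A4}. By linearity of the inequality I may normalise and define
\[U(T) \ := \ \sup\bigl\{\, g_+(t) \,:\, t\geqslant T,\ g\in\mathcal F\,\bigr\},\qquad \mathcal F := \{g \text{ satisfying the inequality with } g(0)\leqslant 1\},\]
which is non-increasing and bounded by $K$. For $t\geqslant T$ and $g\in\mathcal F$, splitting the memory integral at $T$ gives
\[\int_0^t g(s)\, R(t,\dd s) \ \leqslant \ K\, R(t,[0,T]) + U(T).\]
By \ref{A4}, for any $\epsilon>0$ there is $T^*(\epsilon,T)$ such that $R(t,[0,T])\leqslant\epsilon$ on $[T^*,\infty)$, whence $g'(t)\leqslant -\alpha g(t) + \beta K\epsilon + \beta U(T)$ there. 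Integrating, then choosing $S(\epsilon)$ large enough to absorb the initial transient ($K e^{-\alpha S(\epsilon)}\leqslant \epsilon$), one finds for $t\geqslant T^*+S(\epsilon)$
\[g(t) \ \leqslant \ \frac{\epsilon}{1-p} + p\, U(T),\qquad p := \beta/\alpha \in [0,1),\]
after using $1 + \beta K/\alpha = 1/(1-p)$. Taking the supremum over $g\in\mathcal F$ and $t\geqslant T^*+S(\epsilon)$ gives the contraction $U(T^*+S(\epsilon)) \leqslant \epsilon/(1-p) + p\, U(T)$. Iterating along a sequence $T_0<T_1<\cdots$ at fixed $\epsilon$ yields $U(T_n)\leqslant \epsilon/(1-p)^2 + p^n K$; sending $n\to\infty$ then $\epsilon\to 0$ forces $U(T)\to 0$. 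Setting $x(t):=U(t)$ closes the argument.

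The main obstacle is the \emph{universality} of $x$: the rate must be the same for every admissible $f$ and must depend only on $(\alpha,\beta,R)$. The trick is to first use the linearity of the inequality to normalise $g(0)\leqslant 1$, and to take the supremum over the entire admissible class inside the definition of $U(T)$; the tail decay of $R$ supplied by \ref{A4} then provides a genuine contraction with factor $p<1$, which can be iterated to force $U(T)\to 0$.
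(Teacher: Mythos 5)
Your proof is correct, and it reaches the conclusion by a route that shares the paper's two key ingredients but packages them differently. Both arguments rest on (i) a uniform a priori bound after reducing to the homogeneous inequality, and (ii) the tail condition \ref{A4}, used to split the memory integral into an old part (negligible for large $t$) and a recent part (contracted by the factor $\beta/\alpha<1$), followed by a geometric iteration along a sequence of times. The difference is in how universality of the rate is enforced: the paper first shows by a barrier argument that $f\leqslant\max\bigl(f(0),\gamma/(\alpha-\beta)\bigr)$, normalises so that $g\leqslant g(0)=1$, and then builds \emph{explicit} majorant functions $x_n$ by induction (decay $(\beta/\alpha)^{n/2}$, with switching times $t_n$ chosen via \ref{A4} applied to $x_{n-1}$), taking $x$ to be the resulting step function, whose construction manifestly involves only $(\alpha,\beta,R)$; you instead define the rate abstractly as the extremal function $U(T)=\sup\{g_+(t):t\geqslant T,\ g\in\mathcal F\}$ over the normalised class, derive the recursive contraction $U(T^*+S)\leqslant \epsilon/(1-p)+p\,U(T)$ with $p=\beta/\alpha$, and iterate, universality being automatic since $U$ depends only on $(\alpha,\beta,R)$. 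The price is a slightly worse a priori constant $K=\alpha/(\alpha-\beta)$ (obtained by absorption rather than the paper's barrier argument, which gives constant $1$), which is harmless since the lemma does not constrain $x(0)$; the gain is that your positive-part bookkeeping disposes of the sub-threshold case $f(0)\leqslant\gamma/(\alpha-\beta)$ in one stroke, where the paper makes a separate case split. One point worth stating explicitly: the exact cancellation of $\gamma$ in your shift $g=f-\gamma/(\alpha-\beta)$ uses that $R(t,\cdot)$ is a probability measure on $[0,t]$, so that $R(t,[0,t])=1$; this is part of the paper's standing framework (and is used implicitly in the paper's own proof as well), so it is not a gap, but your write-up should invoke it.
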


\begin{proof}
Set $a=\gamma/(\alpha-\beta)$, and let $b>a$. As a first step, let us prove that, if $f(0)\leqslant b$, then $f(t)\leqslant b$ for all $t\geqslant 0$. Suppose that $s=\inf\{t\geqslant 0, f(t)>b\}$ is finite. Then $f(s)=b$ and $\int_0^s f(u)R(s,\dd  u) \leqslant b$, and thus
\[f'(s) \ \leqslant \  (- \alpha+\beta)(b-a) \ < \ 0\,,\]
which yields a contradiction, and prove the claim. Since we can take any arbitrary $b>a$, we obtain that, if $f(0) \leqslant a$, then $f(t) \leqslant a$ for all $t\geqslant 0$, which concludes the proof of the lemma in this case. 

For the rest of the proof, we assume that $f(0)>a$. The previous result applied with $b=f(0)$ shows that $f(t) \leqslant f(0)$. Let $g(t) = (f(t)-a)/(f(0)-a)$, which is such that, for all $t\geqslant 0$, $ g(t) \leqslant g(0)=1$  and
\[g'(t) \ \leqslant \ -\alpha g(t)  +\beta\int_0^tg(s)R(t,\dd s)\,. \]
In particular, $g'\leqslant-\alpha g + \beta$, and thus $g \leqslant x_0$ where, for all $t\geqslant 0$,
\[x_0(t) \ = \ e^{-\alpha t} + \frac{\beta}{\alpha}(1-e^{-\alpha t})\,. \]
Let $c=\sqrt{\beta/\alpha} $, so that $\beta/\alpha < c < 1$, and $t_0=0$. Suppose by induction that a function $x_n:\R_+\rightarrow \R_+$ and a time $t_n\geqslant n$  have been defined for some $n\in\N$, with $x_n(t)\rightarrow c^{n+2}$ as $t\rightarrow+\infty$. Let $t_{n+1}\geqslant 1+t_n$ be large enough so that
\[x_n(t_{n+1}) \leqslant c^{n+1}\qquad\text{and}\qquad \int_0^t x_n(s)R(t,\dd s) \leqslant c^{n+1}\ \forall t\geqslant t_{n+1}\,,\]
which is possible thanks to \ref{A4} and the fact $c^{n+1}>c^{n+2}$. Define $x_{n+1}(t) = x_n(t)$ for $t< t_{n+1}$ and
\[x_{n+1}(t) = e^{-\alpha (t-t_{n+1}) }c^{n+1} + \po 1 - e^{-\alpha (t-t_{n+1}) }\pf c^{n+3}  \]
for $t\geqslant t_{n+1}$. Then, $x_{n+1}(t)\rightarrow c^{n+3}$ as $t\rightarrow +\infty$, which concludes the definition by induction of $x_n$ and $t_n$ for all $n\in \N$. Remark that this construction only involves $\alpha,\beta,R$. Let us prove that $g\leqslant x_n$ for all $n\in \N$. We have already treated the case $n=0$, suppose that this is true for some $n\in\N$. For $t<t_{n+1}$, $g(t)\leqslant x_n(t)=x_{n+1}(t)$. For $t\geqslant t_{n+1}$,
\[g'(t) \ \leqslant \ -\alpha g(t)  +\beta\int_0^tg(s)R(t,\dd s)
\ \leqslant \ -\alpha g(t)  +\beta\int_0^tx_n(s)R(t,\dd s)
\ \leqslant \ -\alpha g(t)  +\beta c^{n+1}\,, \]
and thus, using that $g(t_{n+1})\leqslant x_n(t_{n+1})\leqslant c^{n+1}$,
\[g(t) \ \leqslant 
e^{-\alpha (t-t_n) }g(t_n) + \po 1 - e^{-\alpha (t-t_n) }\pf c^{n+3} \ \leqslant  \ x_{n+1}(t)\,,\]
which concludes the proof that $g\leqslant x_n$ for all $n\in\N$. Define $x(t) = c^{n}$ for $t\in[t_n,t_{n+1})$ (remark that $t_n\geqslant n \rightarrow +\infty$, so that $x(t)$ is indeed defined for all $t\geqslant 0$). Then $g\leqslant x$, which concludes the proof of the lemma.
\end{proof}

\begin{rem}
In the case where $R_t=\delta_t$ for all $t\geqslant 0$, of course the result holds with $x(t) = e^{-(\alpha-\beta)t}$. In the uniform case, assuming that $f'=-\alpha f + \beta F$ with $F(t) = t^{-1}\int_0^t f(s) \dd s$, we see that, for large $t$, $F(t)$ evolves slowly, so the evolution of $f$ is approximately $f'(t) \simeq  - \alpha f(t) + \beta F(t_0)$ for  with $0\leqslant t-t_0 \ll t_0$, so that $f(t) \simeq \beta/\alpha F(t)$ and $F'(t) \simeq -(1-\beta/\alpha)/t F(t)$. As a consequence, $F$ (hence $f$) goes to $0$ as $t^{\beta/\alpha -1}$.
\end{rem}

\begin{prop}
\label{prop:longtime}
Under \ref{A}, there exists a positive  function $Q$ on $\R_+$ that depends only on $\rho$ and $\kappa$ and vanishes at infinity such that the following holds. Let $(X_t^\sigma,\mu_t^\sigma,m_t^\sigma)_{t\geqslant 0}$ and $(\tilde X_t^\sigma,\tilde \mu_t^\sigma,\tilde m_t^\sigma)_{t\geqslant 0}$  solve \eqref{eq:init} with respective initial distributions $m_0^\sigma,\tilde m_0^\sigma\in\mathcal P_2(\R^d)$. Then for all $t\geqslant 0$,
\[\mathbb W_2 (m_t^\sigma,\tilde m_t^\sigma) + \mathbb W_2 (\mu_t^\sigma,\tilde \mu_t^\sigma) \ \leqslant \ Q(t)\mathbb W_2 (m_0^\sigma,\tilde m_0^\sigma)\,.  \]
\end{prop}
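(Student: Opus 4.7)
The natural strategy is to exploit the parallel coupling machinery from Section~\ref{sec:couplings} combined with the Grönwall-type Lemma~\ref{lem:Gronwall_memory}. Concretely, I would start by constructing a coupling of the two processes sharing the same Brownian motion $(B_t)_{t\geqslant 0}$, with initial coupling $(X_0^\sigma,\tilde X_0^\sigma)$ chosen to be $\mathbb W_2$-optimal between $m_0^\sigma$ and $\tilde m_0^\sigma$, so that $\mathbb E[|X_0^\sigma-\tilde X_0^\sigma|^2]=\mathbb W_2^2(m_0^\sigma,\tilde m_0^\sigma)$. Applying the first item of Proposition~\ref{prop:parallelcoupling} (since $\sigma=\tilde\sigma$ here), then taking expectations, and setting $f(t)=\mathbb E[|X_t^\sigma-\tilde X_t^\sigma|^2]$ yields
\[f'(t) \ \leqslant \ -2\rho f(t) + 2\kappa\, \mathbb W_2^2(\mu_t^\sigma,\tilde\mu_t^\sigma)\,. \]

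The key is then to close this inequality using Lemma~\ref{lem:W2noyau}, which gives
\[\mathbb W_2^2(\mu_t^\sigma,\tilde\mu_t^\sigma) \ \leqslant \ \int_0^t \mathbb W_2^2(m_s^\sigma,\tilde m_s^\sigma)\, R(t,\dd s) \ \leqslant \ \int_0^t f(s)\, R(t,\dd s)\,,\]
where the second inequality uses that $\mathbb W_2^2(m_s^\sigma,\tilde m_s^\sigma)\leqslant f(s)$ by definition of $\mathbb W_2$. Thus
\[f'(t)\ \leqslant\ -2\rho f(t) + 2\kappa \int_0^t f(s) R(t,\dd s)\,.\]
Since $\rho>\kappa$ by \ref{A9}, Lemma~\ref{lem:Gronwall_memory} applied with $\alpha=2\rho$, $\beta=2\kappa$, $\gamma=0$ produces a decreasing vanishing function $x(\cdot)$ (depending only on $\rho,\kappa,R$) such that $f(t)\leqslant x(t) f(0)= x(t)\mathbb W_2^2(m_0^\sigma,\tilde m_0^\sigma)$. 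Taking square roots gives the bound on $\mathbb W_2(m_t^\sigma,\tilde m_t^\sigma)$.

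Plugging this back into Lemma~\ref{lem:W2noyau} yields
\[\mathbb W_2^2(\mu_t^\sigma,\tilde\mu_t^\sigma)\ \leqslant \ \mathbb W_2^2(m_0^\sigma,\tilde m_0^\sigma) \int_0^t x(s) R(t,\dd s)\,.\]
So it only remains to check that $\int_0^t x(s) R(t,\dd s)\to 0$ as $t\to\infty$: for any fixed $s_0>0$, split the integral at $s_0$; the part over $[0,s_0]$ is bounded by $x(0) R(t,[0,s_0])$, which vanishes as $t\to\infty$ by \ref{A4}, while the part over $[s_0,t]$ is bounded by $x(s_0)$ since $x$ is decreasing and $R(t,\cdot)$ is a probability measure. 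Letting $s_0\to\infty$ afterwards shows the whole integral vanishes. Setting
\[Q(t)\ :=\ \sqrt{x(t)} + \sqrt{\int_0^t x(s) R(t,\dd s)}\]
finishes the proof, with $Q$ depending only on $\rho,\kappa$ (and on $R$ via \ref{A4}).

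The routine parts are the coupling bookkeeping and the square root manipulation; the only subtle step is the iterative construction inside Lemma~\ref{lem:Gronwall_memory}, which has already been carried out, and confirming that the contribution of the memory term $\int_0^t x(s) R(t,\dd s)$ indeed vanishes uniformly despite $R(t,\cdot)$ being a possibly non-trivial probability kernel. This last verification is where \ref{A4} is crucially used, and it is the most delicate point of the argument.
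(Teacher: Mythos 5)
Your proposal is correct and follows essentially the same route as the paper: a synchronous coupling driven by a common Brownian motion with a $\mathbb W_2$-optimal initial coupling, the first item of Proposition~\ref{prop:parallelcoupling} for the pathwise differential inequality, Lemma~\ref{lem:W2noyau} to close it in terms of $f$, and Lemma~\ref{lem:Gronwall_memory} with $\alpha=2\rho$, $\beta=2\kappa$, $\gamma=0$. Your explicit verification that $\int_0^t x(s)R(t,\dd s)\to 0$ via the splitting at $s_0$ (and the careful handling of squares versus square roots in defining $Q$) is a welcome sharpening of a step the paper leaves implicit, and your remark that $Q$ also depends on $R$ through \ref{A4} is consistent with the construction of $x$ in Lemma~\ref{lem:Gronwall_memory}.
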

\begin{proof}
We can consider a copy of the processes since the statement only concerns the distributions. We consider  $(Y_0,\tilde Y_0)$ an optimal $\mathbb W_2$ coupling of $m_0^\sigma$ and $\tilde m_0^\sigma$. Then $(X_t^\sigma)_{t\geqslant 0}$ (resp. $(\tilde X_t^\sigma)_{t\geqslant 0}$) has the same law as the solution $(Z_t)_{t\geqslant 0}$ (resp. $(\tilde Z_t)_{t\geqslant 0}$) of \eqref{eq:Zgeneral} associated to $Y_0$, $\sigma$ and $\nu =(\mu_t^\sigma)_{t\geqslant 0}$ (resp. $\tilde Y_0$, $\sigma$ and $(\tilde \mu_t^\sigma)_{t\geqslant 0}$). In particular, $(Z_t,\tilde Z_t)$ is a coupling of $m_t^\sigma$ and $\tilde m_t^\sigma$ for all $t\geqslant 0$ which, together with Lemma~\ref{lem:W2noyau}, implies that
\[\mathbb W_2^2(\mu_t^\sigma,\tilde \mu_t^\sigma) \ \leqslant \  \int_0^t f(s) R(t,\dd s)\]
with $f(s) = \mathbb E (| Z_s - \tilde Z_s|^2)$ for all $s\geqslant 0$. Taking the expectation in the first point of Proposition~\ref{prop:parallelcoupling}, we get that, for all $t\geqslant 0$,
\begin{equation}\label{eq:f'}
f'(t) \ \leqslant \   -2\rho f(t) +2\kappa \int_0^t f(s) R(t, \dd s) .
\end{equation}
 The conclusion follows, thanks to \ref{A4}, by applying Lemmas~\ref{lem:Gronwall_memory} and \ref{lem:W2noyau} to get
\[\mathbb W_2 (m_t^\sigma,\tilde m_t^\sigma) + \mathbb W_2 (\mu_t^\sigma,\tilde \mu_t^\sigma) \ \leqslant \ f(t) + \int_0^t f(s) R(t,\dd s) \ \leqslant \ \co x(t) + \int_0^t x(s) R(t,\dd s)\cf f(0) \,. \]
\end{proof}

\begin{cor}\label{cor:equilibrium}
Under \ref{A}, there exists a unique $m_\infty^\sigma \in \mathcal P_2(\R^d)$ which is stationary for \eqref{eq:init}, in the sense that the process $(Z_t)_{t\geqslant 0}$ solving \eqref{eq:Zgeneral} with $Y_0 \sim m_\infty^\sigma$ and $\nu_t = m^\sigma_\infty$ for all $t\geqslant0$ is such that $Z_t \sim m_\infty^\sigma$ for all $t\geqslant 0$.
\end{cor}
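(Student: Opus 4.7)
The plan is to reduce the problem to a fixed-point problem on $(\mathcal P_2(\R^d), \mathbb W_2)$, passing through the linear, time-homogeneous diffusion obtained by freezing the interaction. Observe first that if, in \eqref{eq:Zgeneral}, we take $\nu_t \equiv m$ constant in time, then the SDE reduces to the linear equation
\[
dZ_t^m \ = \ \bigl(a(Z_t^m) + b(Z_t^m, m)\bigr) \dd t + \sigma M \, \dd B_t,
\]
and the stationarity property required of $m_\infty^\sigma$ in the statement is precisely that $m_\infty^\sigma$ be the invariant measure of this SDE for the choice $m = m_\infty^\sigma$.

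First, I would show that for each fixed $m \in \mathcal P_2(\R^d)$ this frozen SDE admits a unique invariant measure $\Psi(m) \in \mathcal P_2(\R^d)$. Well-posedness follows from \ref{A8}. Using \ref{A9} with $\mu = \nu = m$, the drift $z \mapsto a(z) + b(z,m)$ is $\rho$-dissipative, so a Young-inequality/It\^o argument yields the uniform moment bound $\sup_{t\geqslant 0}\mathbb E|Z_t^m|^2 \leqslant C(m,\sigma) < \infty$. Moreover, Proposition~\ref{prop:parallelcoupling} item 1, applied with the constant choice $\nu_t = \tilde\nu_t = m$, shows that the associated Markov semigroup is an $e^{-\rho t}$-contraction in $\mathbb W_2$. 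Combining these two facts, $\mathbb W_2(\mathcal L(Z_{t+s}^m),\mathcal L(Z_t^m)) \leqslant e^{-\rho t} \mathbb W_2(\mathcal L(Z_0^m),\mathcal L(Z_s^m))$, and the right-hand side is $\leqslant C e^{-\rho t}$ uniformly in $s$ by the moment bound; hence $(\mathcal L(Z_t^m))_{t\geqslant 0}$ is $\mathbb W_2$-Cauchy as $t\to\infty$ and converges to a measure $\Psi(m)\in\mathcal P_2(\R^d)$, which is necessarily invariant, and unique as such by the same contraction.

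Second, I would show that the map $\Psi:\mathcal P_2(\R^d)\to\mathcal P_2(\R^d)$ is a strict contraction. Given $m,\tilde m\in\mathcal P_2(\R^d)$, start $Z_0^m\sim\Psi(m)$ and $Z_0^{\tilde m}\sim\Psi(\tilde m)$ from an optimal $\mathbb W_2$-coupling, and drive both frozen SDEs by the same Brownian motion. Proposition~\ref{prop:parallelcoupling} item 1 applied with constant $\nu_t = m$, $\tilde\nu_t = \tilde m$ (so that $\mathbb W_2^2(\nu_t,\tilde\nu_t) = \mathbb W_2^2(m,\tilde m)$ for all $t$), after taking expectations and solving the resulting linear ODE, gives
\[
\mathbb E|Z_t^m - Z_t^{\tilde m}|^2 \ \leqslant \ e^{-2\rho t}\,\mathbb W_2^2(\Psi(m),\Psi(\tilde m)) + \frac{\kappa}{\rho}\bigl(1 - e^{-2\rho t}\bigr)\mathbb W_2^2(m,\tilde m).
\]
By stationarity, $(Z_t^m,Z_t^{\tilde m})$ is a coupling of $\Psi(m)$ and $\Psi(\tilde m)$ at every time $t$, so the left-hand side upper-bounds $\mathbb W_2^2(\Psi(m),\Psi(\tilde m))$. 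Dividing by $1-e^{-2\rho t}>0$ and letting $t\to\infty$ yields $\mathbb W_2^2(\Psi(m),\Psi(\tilde m))\leqslant (\kappa/\rho)\,\mathbb W_2^2(m,\tilde m)$, with $\kappa/\rho<1$ by \ref{A9}.

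Since $(\mathcal P_2(\R^d),\mathbb W_2)$ is complete, Banach's fixed-point theorem then provides a unique fixed point $m_\infty^\sigma$ of $\Psi$, which is exactly the stationary measure of the corollary. The main technical point I expect is the Lyapunov/tightness step producing $\Psi(m)$ for each $m$: it is routine given the dissipativity, but cannot be entirely bypassed. Apart from that, the whole scheme is the direct noisy analogue of the construction of $\lambda$ in the last part of Lemma~\ref{cor:deterministic_flow}, which corresponds to the deterministic case $\sigma=0$ of the very same fixed-point argument.
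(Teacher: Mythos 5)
Your argument is correct, but it takes a genuinely different route from the paper. You build the stationary law as the Banach fixed point of the frozen-measure map $\Psi$ which sends $m\in\mathcal P_2(\R^d)$ to the unique invariant law of the linear SDE $\dd Z_t=(a(Z_t)+b(Z_t,m))\dd t+\sigma M\dd B_t$: you first construct $\Psi(m)$ for each $m$ (dissipativity from \ref{A9} with $\mu=\nu=m$, the synchronous coupling of Proposition~\ref{prop:parallelcoupling}, a uniform second-moment bound and a Cauchy argument in the complete space $(\mathcal P_2(\R^d),\mathbb W_2)$), and then show $\Psi$ is a strict contraction with rate $\sqrt{\kappa/\rho}$; as you note, this is the noisy analogue of the construction of $\lambda$ through the map $\Pi$ in Lemma~\ref{cor:deterministic_flow}. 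The paper instead remarks that the fixed-point property does not depend on the kernel $R$, reduces to $R(t,\cdot)=\delta_t$, and applies the fixed-point theorem directly to the nonlinear flow $\Phi_s:m_0^\sigma\mapsto m_s^\sigma$, which is a $\mathbb W_2$-contraction for $s$ large by Proposition~\ref{prop:longtime}; invariance at all times then follows from the commutation $\Phi_t\circ\Phi_s=\Phi_s\circ\Phi_t$. The paper's route is shorter because it recycles Proposition~\ref{prop:longtime}, already needed for Theorem~\ref{th:mr}, and thereby avoids constructing the invariant measure of each frozen SDE; your route is more self-contained at this point (it uses neither Proposition~\ref{prop:gen:wp} nor Proposition~\ref{prop:longtime}) and gives an explicit contraction rate for $\Psi$, at the price of the Lyapunov/tightness step you flag, which indeed cannot be skipped. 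Two details worth making explicit: uniqueness in the sense of the corollary requires observing that any stationary $m$ is an invariant law of the frozen-$m$ SDE, hence $\Psi(m)=m$ by your uniqueness for the frozen dynamics, so Banach uniqueness applies; and in your contraction estimate, once you divide by $1-e^{-2\rho t}>0$ the bound $\mathbb W_2^2(\Psi(m),\Psi(\tilde m))\leqslant(\kappa/\rho)\,\mathbb W_2^2(m,\tilde m)$ already holds for each fixed $t>0$, so no limit $t\to\infty$ is needed there.
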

\begin{proof}
Remark that the fact that $\mu \in \mathcal P_2(\R^d)$ is a fixed point for \eqref{eq:init} does not depend on $R$. As a consequence, we only consider the classical non-linear McKean-Vlasov case, i.e. $R(t,\cdot)=\delta_t$ for all $t\geqslant 0$.  For $s\geqslant 0$, let $\Phi_s:\mathcal P_2(\R^d)\rightarrow \mathcal P_2(\R^d)$ be defined by $\Phi_s(m_0^\sigma)=m_s^\sigma$. In the classical non-linear case, we have $\Phi_s(m_t^\sigma)=m_{s+t}^\sigma$ for all $t,s\geqslant 0$. Let $s\geqslant 0$ be large enough so that, by Proposition~\ref{prop:longtime}, $\Phi_s$ is a contraction of $(\mathcal P_2(\R^d),\mathbb W_2)$, which is a complete metric space. Denote $m_\infty^\sigma$ the unique fixed point of $\Phi_s$. Using that $\Phi_{t+s} = \Phi_t \circ \Phi_s=\Phi_s \circ \Phi_t$ for all $t\geqslant 0$, we get that $\Phi_t(m_\infty^\sigma)$ is a fixed point of $\Phi_s$ for all $t\geqslant 0$ which, by uniqueness, implies that $\Phi_t(m_\infty^\sigma)=m_\infty^\sigma$ for all $t\geqslant 0$.
\end{proof}

\subsection{Low noise asymptotics}\label{sec:lownoise}

First, we state the low noise convergence of the stationary distribution.

\begin{prop}\label{cor:equilibrium_lownoise}
Under \ref{A}, let $\lambda\in \R^d$ be given by Lemma~\ref{lem:exist_lambda} and, for $\sigma \geqslant 0$, let $m_\infty^\sigma$ be given by Corollary~\ref{cor:equilibrium}. For all $\sigma\geqslant 0$,
\[\mathbb W_2^2\po m_\infty^\sigma,\delta_\lambda\pf \ \leqslant \ \frac{d \|M\|^2 }{2(\rho-\kappa)} \sigma^2\,.\]
\end{prop}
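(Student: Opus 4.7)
The plan is to apply the second point of Proposition~\ref{prop:parallelcoupling} to a well-chosen pair of processes: on one side, a stationary version of the nonlinear diffusion; on the other side, the constant trajectory at the fixed point $\lambda$. The contraction coming from $\rho-\kappa>0$ will then translate the noise strength into an a priori bound on the variance of $m_\infty^\sigma$ around $\lambda$.

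More precisely, let $(Z_t)_{t\geqslant 0}$ be a stationary solution of \eqref{eq:Zgeneral} with $\sigma>0$, $Y_0\sim m_\infty^\sigma$ and $\nu_t=m_\infty^\sigma$ for all $t\geqslant 0$, which exists by Corollary~\ref{cor:equilibrium}. On the other hand, set $\tilde\sigma=0$, $\tilde Y_0=\lambda$ and $\tilde\nu_t=\delta_\lambda$ for all $t\geqslant 0$; by Lemma~\ref{lem:exist_lambda}, the constant process $\tilde Z_t=\lambda$ is a (deterministic) solution of \eqref{eq:Zgeneral} in this setting. The second item of Proposition~\ref{prop:parallelcoupling} then yields, for $f(t)=\mathbb E|Z_t-\lambda|^2$,
\[
f'(t)\ \leqslant\ d\sigma^2\|M\|^2 - 2\rho\, f(t) + 2\kappa\, \mathbb W_2^2(m_\infty^\sigma,\delta_\lambda)\,,\qquad t\geqslant 0.
\]

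The key observation is that, by stationarity, $Z_t\sim m_\infty^\sigma$ for every $t\geqslant 0$, so $f(t)=\int_{\R^d}|x-\lambda|^2 m_\infty^\sigma(\dd x)$ is independent of $t$ and hence $f'\equiv 0$. Moreover, since $\delta_\lambda$ is a Dirac mass, its unique coupling with $m_\infty^\sigma$ gives $\mathbb W_2^2(m_\infty^\sigma,\delta_\lambda)=\int_{\R^d}|x-\lambda|^2 m_\infty^\sigma(\dd x)=f(t)$. Plugging this into the inequality above turns the differential estimate into the algebraic bound
\[
0\ \leqslant\ d\sigma^2\|M\|^2 - 2(\rho-\kappa)\mathbb W_2^2(m_\infty^\sigma,\delta_\lambda)\,,
\]
from which the conclusion follows by dividing by $2(\rho-\kappa)>0$.

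There is no genuine obstacle here: the argument is essentially a one-line application of the coupling lemma, and the only mild care is to verify that $\tilde Z_t\equiv \lambda$ is a legitimate admissible process in \eqref{eq:Zgeneral} (guaranteed by Lemma~\ref{lem:exist_lambda}) and that the stationary measure exists (Corollary~\ref{cor:equilibrium}), together with the elementary identification $\mathbb W_2^2(m_\infty^\sigma,\delta_\lambda)=f(t)$ at stationarity.
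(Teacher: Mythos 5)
Your proof is correct and follows essentially the same route as the paper: couple the stationary solution (law $m_\infty^\sigma$, interaction measure $m_\infty^\sigma$) with the constant trajectory $\tilde Z_t\equiv\lambda$ (with $\tilde\sigma=0$, interaction $\delta_\lambda$), apply the second point of Proposition~\ref{prop:parallelcoupling}, and use stationarity together with $\mathbb W_2^2(m_\infty^\sigma,\delta_\lambda)=\mathbb E|Z_t-\lambda|^2$ to turn the differential inequality into the stated algebraic bound. The paper leaves the last step as a ``straightforward consequence''; you have simply written it out explicitly.
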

\begin{proof}
Let $Z$ (resp. $\tilde Z$) solve \eqref{eq:Zgeneral}  with $Y_0\sim m_\infty^\sigma$, $\sigma$ and $\nu_t = m_\infty^\sigma$ for all $t\geqslant 0$ (resp. $\tilde Y_0=\lambda$, $\tilde \sigma=0$ and $\nu_t = \delta_{\lambda}$ for all $t\geqslant 0$). In particular, for all for all $t\geqslant 0$, $Z_t\sim m_\infty^\sigma$,  $\tilde Z_t = \lambda$ and $\mathbb W_2^2\po m_\infty^\sigma,\delta_\lambda\pf = \mathbb E ( |Z_t-\lambda|^2)$. The result then is a straightforward consequence of the second point of Proposition~\ref{prop:parallelcoupling}.
\end{proof}

Second, we consider the low noise convergence of the process on finite time intervals.

\begin{prop}\label{prop:small_time_coupling}
Assume \ref{A}, let $m_0\in\mathcal P_2(\R^d)$ and $Y_0\sim m_0$. There exists $\mathcal K\in \mathcal C^0(\R_+,\R_+)$ such that the following holds. For a fixed Brownian motion $(B_t)_{t\geqslant 0}$, for all $\sigma\geqslant 0$, let $(X_t^\sigma)_{t\geqslant 0}$ be the solution of  \eqref{eq:init} with $X_0^\sigma =Y_0$. For all $t\geqslant 0$, $\sigma\geqslant 0$ and $\varepsilon\in(0,1]$,
\begin{equation}\label{eq:tempscourt}
\mathbb P \po \sup_{s\in[0,t]}|X_s^{\sigma} - X_s^{0}| >\varepsilon \pf \ \leqslant \     \frac{\sigma \mathcal K(t) }{\varepsilon} \,.
\end{equation}
\end{prop}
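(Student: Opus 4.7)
The plan is to use a synchronous (parallel) coupling of $X^\sigma$ and $X^0$ driven by the same Brownian motion, to control $\Delta_t := X^\sigma_t - X^0_t$ in $L^2$ uniformly in time, and finally to upgrade to a sup-in-time control via the Burkholder–Davis–Gundy inequality before applying Markov. Since $\Delta_0 = 0$, viewing $X^\sigma$ and $X^0$ as instances of \eqref{eq:Zgeneral} with $\nu_s = \mu^\sigma_s$ and $\tilde\nu_s = \mu^0_s$, the second part of Proposition~\ref{prop:parallelcoupling} gives
$$f'(t)\ \leqslant\ d\|M\|^2\sigma^2 - 2\rho f(t) + 2\kappa\, \mathbb W_2^2(\mu^\sigma_t,\mu^0_t), \qquad f(0) = 0,$$
with $f(t) = \mathbb E[|\Delta_t|^2]$. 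Bounding $\mathbb W_2^2(\mu^\sigma_t,\mu^0_t) \leqslant \int_0^t f(s)R(t,\dd s)$ via Lemma~\ref{lem:W2noyau} and invoking Lemma~\ref{lem:Gronwall_memory} with $\alpha = 2\rho$, $\beta = 2\kappa$, $\gamma = d\|M\|^2\sigma^2$ yields the \emph{uniform-in-time} estimate $f(t) \leqslant C_0\sigma^2$ with $C_0 = d\|M\|^2/(2(\rho-\kappa))$. Plugging this back into Lemma~\ref{lem:W2noyau} also produces $\mathbb W_2^2(\mu^\sigma_s,\mu^0_s) \leqslant C_0\sigma^2$ uniformly in $s\geqslant 0$.

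To pass from $L^2$-in-$t$ to a supremum-in-$t$ bound, I would integrate the pathwise Itô inequality underlying Proposition~\ref{prop:parallelcoupling} (discarding the non-positive contraction term):
$$\sup_{s\leqslant t}|\Delta_s|^2\ \leqslant\ (2\kappa C_0 + d\|M\|^2)\sigma^2 t + \sup_{s\leqslant t}|N_s|, \qquad N_s := 2\sigma\int_0^s \Delta_u\cdot M\,\dd B_u.$$
Thanks to the uniform $L^2$ bound on $\Delta$, $N$ is a genuine $L^2$-martingale with $\langle N\rangle_t \leqslant 4\sigma^2\|M\|^2\int_0^t|\Delta_u|^2\,\dd u$, so BDG together with Fubini delivers $\mathbb E[\sup_{s\leqslant t}|N_s|] \leqslant C\sigma(\int_0^t f(u)\,\dd u)^{1/2} \leqslant C'\sigma^2\sqrt t$. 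Altogether $\mathbb E[\sup_{s\leqslant t}|\Delta_s|^2] \leqslant \sigma^2 \mathcal K(t)^2$ for an explicit continuous $\mathcal K:\R_+\to\R_+$ that is polynomial in $t$. Cauchy–Schwarz and Markov then close the argument:
$$\mathbb P\Bigl(\sup_{s\leqslant t}|\Delta_s| > \varepsilon\Bigr)\ \leqslant\ \frac{\mathbb E\bigl[\sup_{s\leqslant t}|\Delta_s|\bigr]}{\varepsilon}\ \leqslant\ \frac{\sigma\, \mathcal K(t)}{\varepsilon}.$$

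The main conceptual subtlety is that \ref{A9} is doing double duty: it tames the drift difference in the Itô expansion \emph{and}, through Lemma~\ref{lem:Gronwall_memory}, it makes the $L^2$ control on $\Delta$ uniform in time. Without this uniformity, the BDG step would inject an exponential factor $e^{Ct}$ into $\mathcal K$, which would be incompatible with its later use in the exit-time analysis of Section~\ref{sec:finalproof}. A secondary technical point, the identification of $N$ as a true (and not only local) martingale so that BDG applies without a stopping-time truncation, is itself a consequence of the uniform $L^2$ bound on $\Delta$.
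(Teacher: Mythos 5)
Your proof is correct, and its second half takes a genuinely different route from the paper. The first step (the uniform-in-time bound $f(t)\leqslant C_0\sigma^2$ via the second point of Proposition~\ref{prop:parallelcoupling}, Lemma~\ref{lem:W2noyau} and Lemma~\ref{lem:Gronwall_memory}, hence $\mathbb W_2^2(\mu_t^\sigma,\mu_t^0)\leqslant C_0\sigma^2$) is exactly the paper's. For the supremum-in-time control, however, the paper does not use BDG: it invokes \ref{D3} and \ref{D4} (via Lemma~\ref{roglic}) to get that $X^0$ stays in $\mathcal D$ and that $a,b$ are Lipschitz on the enlargement $\mathcal D_{+1}$, runs a pathwise Gr\"onwall estimate up to the exit time of $X^\sigma$ from $\mathcal D_{+1}$ (this is where the restriction $\varepsilon\leqslant 1$ enters), and then controls $\sup_{s\leqslant t}|B_s|$ by Doob's maximal inequality, with a trivial dichotomy on whether $\sigma e^{Lt}Ct$ exceeds $\varepsilon/2$; the resulting $\mathcal K$ grows like $e^{Lt}\sqrt t$. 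Your argument instead exploits the one-sided contraction \ref{A9} pathwise in the It\^o expansion, keeps the martingale term $N$, and applies BDG plus Markov; the quadratic-variation bound $\langle N\rangle_t\leqslant 4\sigma^2\|M\|^2\int_0^t|\Delta_u|^2\dd u$ together with the uniform $L^2$ bound makes this airtight (and BDG would apply to the continuous local martingale even without the true-martingale observation). What each buys: your route needs only \ref{A} (so it actually matches the stated hypotheses, whereas the paper's proof quietly uses \ref{D}), dispenses with the $\varepsilon\leqslant 1$ restriction and the stopping at $\tau_{+1}$, and gives a polynomially growing $\mathcal K$; the paper's route is more elementary (no stochastic-calculus input beyond Doob for $B$) and is the natural one when only local regularity of $a$ near $\mathcal D$ is available, which is the setting the authors care about for later localization arguments. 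One side remark of yours is off: an exponential $\mathcal K(t)$ would \emph{not} break the exit-time analysis of Section~\ref{sec:finalproof}, since there the proposition is applied at fixed $T$ with $\sigma\to0$ before letting $T\to\infty$ (indeed the paper's own $\mathcal K$ is exponential); this does not affect the validity of your proof.
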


\begin{proof}
Let $f(t) = \mathbb E(|X_s^\sigma-X_s^0|^2)$ for $t\geqslant 0$. In particular, $ \mathbb W_2^2(m_t^\sigma,m_t^0)\leqslant   f(t)$ for all $t\geqslant 0$ and, applying Lemma~\ref{lem:W2noyau} and the second point of Proposition~\ref{prop:parallelcoupling}, 
\[f'(t) \ \leqslant \ \gamma - 2\rho f(t) + 2\kappa \int_0^t f(s)R(t,\dd s)\]
with $\gamma=d\sigma^2   \|M\|^2$. Since $f(0)=0$, Lemma~\ref{lem:Gronwall_memory} implies that $f(t) \leqslant \gamma/(2(\rho-\kappa))$ for all $t\geqslant 0$, and thus
\[\mathbb W_2^2(m_t^\sigma,m_t^0) \ \leqslant \ \frac{d \|M\|^2\sigma^2}{2(\rho-\kappa)}\]
for all $t\geqslant 0$, which also implies 
\[\mathbb W_2^2(\mu_t^\sigma,\mu_t^0) \ \leqslant \ \frac{d \|M\|^2\sigma^2}{2(\rho-\kappa)}\]
for all $t\geqslant 0$. 

Recall that, from \ref{A8} and  \ref{D3}, $a$ and $b$ are Lipschitz continuous on $\mathcal D_{+1}$. Moreover, \ref{D4} implies that $X_t^0\in\mathcal D$ for all $t\geqslant 0$, see Lemma~\ref{roglic}. Let $\tau_{+1} = \inf\{t\geqslant 0, X^\sigma_t\notin \mathcal D_{+1}\}$. Then, for some $L>0$, for all $t\leqslant \tau_{+1}$,
\begin{align*}
 |X_t^\sigma - X_t^0| \ & \leqslant \ \sigma \|M\| |B_t| + L \int_0^t \po   |X_s^\sigma - X_s^0| + \mathbb W_2 \po \mu_s^\sigma,\nu_s^0 \pf \pf \dd s \,.
\end{align*}
Gr{\"o}nwall's Lemma yields
\[\text{almost surely, }\forall t\in [0,\tau_{+1}],\qquad \sup_{s\in[0,t]}|X_s^\sigma - X_s^0| \ \leqslant \ \sigma e^{Lt}\po \|M\| \sup_{s\in[0,t]}|B_s| + C t\pf\,, \]
with $C=L \|M\|\sqrt{d/(2(\rho-\kappa))}$.

Now, fix $t\geqslant 0$ and $\varepsilon\in(0,1]$. Remark that $|X_s^\sigma-X_s^0| \leqslant \varepsilon$ for some $s\geqslant 0$ implies that $d(X_s^\sigma,\mathcal D)\leqslant 1$. Hence,
\[\left\{\sigma e^{Lt}\po \|M\| \sup_{s\in[0,t]}|B_s| + C t\pf \leqslant \varepsilon\right\} \subset \left(\{\tau_{+1} \geqslant t\} \cap \left\{\sup_{s\in[0,t]}|X_s^\sigma - X_s^0| \leqslant \varepsilon \right\}\right)\,.\]
As a conclusion, if $\sigma e^{Lt} C t >\varepsilon/2$ we simply bound
\[\mathbb P \po \sup_{s\in[0,t]}|X_s^{\sigma} - X_s^{0}| >\varepsilon \pf \ \leqslant \ 1 \ \leqslant \ \frac{2\sigma e^{Lt} C t}{\varepsilon} \,,\]
while, if $\sigma e^{Lt} C t \leqslant \varepsilon/2$, we can bound
\[\mathbb P \po \sup_{s\in[0,t]}|X_s^{\sigma} - X_s^{0}| >\varepsilon \pf \ \leqslant \ \mathbb P \po \sup_{s\in[0,t]}|B_s| > \frac{\varepsilon e^{-Lt}}{2\sigma \|M\|}\pf \ \leqslant \ \frac{2\sigma \|M\| \sqrt{t} e^{Lt}}{\varepsilon }\,,\]
thanks to Doob's inequality.
\end{proof}

From the previous result, the question of the exit time of $X^\sigma$ before a given fixed time $T$ (independent from $\sigma$) in the low noise regime boils down to the question of the exit time for the deterministic process $X^0$, which is addressed in the next lemma.

 \begin{lem}
\label{roglic}
Under Assumptions~\ref{A} and \ref{D}, 
the process $X^0$  with initial law $m_0$ and solving  the deterministic equation \eqref{eq:init} with $\sigma =0$ satisfies :
\begin{equation*}
    \mathbb P \po X^0_t \in \mathcal D\, \forall t\geqslant 0\pf=1\,.
\end{equation*} 

\end{lem}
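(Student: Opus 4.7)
The plan is to use the parallel coupling of Proposition~\ref{prop:parallelcoupling}(1) between $X^0$ and the constant trajectory equal to $\lambda$. First, observe that the constant path $\tilde Z_t \equiv \lambda$ is a valid solution of \eqref{eq:Zgeneral} with $\tilde\sigma = 0$, $\tilde Y_0 = \lambda$, and $\tilde\nu_t = \delta_\lambda$, thanks to the defining identity $a(\lambda) + b(\lambda, \delta_\lambda) = 0$ from Lemma~\ref{lem:exist_lambda}. Invoking \ref{D4} together with compactness of $\mathrm{supp}(m_0)$, I would pick $r_0 < r$ such that $\mathrm{supp}(m_0) \subset \bar{\mathbb B}(\lambda, r_0) \subset \mathbb B(\lambda, r) \subset \mathcal D$, so that the claim reduces to showing $|X^0_t - \lambda| \leq r_0$ almost surely for all $t \geq 0$.

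Setting $h(t) := |X^0_t - \lambda|^2$, Proposition~\ref{prop:parallelcoupling}(1) directly yields the pathwise bound
$$h'(t) \leq -2\rho\, h(t) + 2\kappa\, \mathbb W_2^2(\mu^0_t, \delta_\lambda).$$
The main obstacle here is that the forcing term $\mathbb W_2^2(\mu^0_t, \delta_\lambda)$ is coupled back to the trajectory through the law $m^0_t$, so it is not yet quantitatively controlled. The plan is to close this in a two-step bootstrap. In the first step, I would take expectations, set $f(t) = \mathbb E\, h(t)$, apply Lemma~\ref{lem:W2noyau} to get
$$f'(t) \leq -2\rho f(t) + 2\kappa \int_0^t f(s)\, R(t, \dd s),$$
and then invoke Lemma~\ref{lem:Gronwall_memory} with $\gamma = 0$. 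Since $f(0) \leq r_0^2$ and the comparison function produced by that lemma is bounded by $1$, this yields $f(t) \leq r_0^2$ uniformly in $t$, and a second application of Lemma~\ref{lem:W2noyau} then provides the deterministic estimate $\mathbb W_2^2(\mu^0_t, \delta_\lambda) \leq r_0^2$ for all $t \geq 0$.

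In the second step, plugging this back into the pathwise inequality gives $h'(t) \leq -2\rho h(t) + 2\kappa r_0^2$, and since $h(0) \leq r_0^2$ with $\kappa < \rho$ by \ref{A9}, a standard Gronwall estimate yields $h(t) \leq \max(h(0), \kappa r_0^2/\rho) \leq r_0^2$ for all $t$, which is exactly the desired bound. The main conceptual difficulty is this implicit trajectory/law coupling through $\mu_t^0$; the bootstrap (first in expectation, then pathwise) resolves it cleanly, and beyond that everything reduces to the contraction hypothesis \ref{A9} and the fixed-point identity for $\lambda$.
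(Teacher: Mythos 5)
Your proposal is correct and follows essentially the same two-step scheme as the paper: first an expectation/Wasserstein bound giving $\mathbb W_2^2(\mu_t^0,\delta_\lambda)\leqslant r^2$ uniformly in $t$ (the paper gets this from the second point of Proposition~\ref{prop:parallelcoupling} and the monotonicity of $f$, which is exactly the first step of the proof of Lemma~\ref{lem:Gronwall_memory} that you invoke), then the pathwise inequality $\partial_t|X_t^0-\lambda|^2\leqslant-2\rho|X_t^0-\lambda|^2+2\kappa r^2$ from the first point of Proposition~\ref{prop:parallelcoupling}, closed by an elementary Gr\"onwall comparison using $\kappa<\rho$. The only cosmetic differences (choosing $r_0<r$ for the compact support, and reading the uniform bound off the comparison function of Lemma~\ref{lem:Gronwall_memory} rather than directly from $f'\leqslant 0$) do not change the argument.
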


\begin{proof}
Let $r>0$ be such that the support of $m_0$ is in the ball $\mathbb B(\lambda,r)\subset \mathcal D$. 
Applying the second point of Proposition~\ref{prop:parallelcoupling} with $\sigma=0$ and using that
\[f(t):=\mathbb E|X^0_t-\lambda|^2 =\mathbb W_2^2 (m_t^0,\delta_\lambda)\]
for all $t\geqslant 0$, we get that $f'(t) \leqslant 0$, and thus $f(t)\leqslant f(0)\leqslant r^2$ for all $t\geqslant 0$. Using now the first point of Proposition~\ref{prop:parallelcoupling}, we get that, almost surely, for all $t\geqslant 0$,
\[\partial_t |X^0_t-\lambda|^2 \leqslant -2\rho |X_t^0-\lambda|^2 + 2\kappa r^2\]
with $|X_0^0 - \lambda|^2 \leqslant r^2$, which implies that $X_t^0 \in \mathbb B(\lambda,r)\subset \mathcal D$ for all $t\geqslant 0$.

\end{proof}

\begin{rem}\label{rem:assumptionInitial} In fact, the same proof works if we only assume that $m_0$ has a compact support in a ball $\mathbb{B}\left(x_0,r'\right)\subset \mathcal D$ where $x_0$ is such that the solution of
\[\partial_t z_t = a(z_t) + b(z_t,\delta_{z_t})\,,\qquad z_0=x_0 \]
stays in $\mathcal D$ for all $t\geqslant 0$.

\end{rem}

\subsection{Proof of Theorem \ref{th:mr}}\label{sec:finalproof}

Fix $a$ and $b$ that satisfy Assumption~\ref{A}, and an initial condition $m_0$. For all $\sigma \geqslant 0$ we consider $(X_t^\sigma)_{t\geqslant 0}$ that solves \eqref{eq:init} where $m_0^\sigma = \mathcal L(X_0^\sigma) =m_0$. Consider $\lambda \in \R^d$ as given by Lemma~\ref{lem:exist_lambda} and, for all $\sigma\geqslant 0$, $(\tilde X_t^\sigma)_{t\geqslant 0} $ that solves \eqref{roserouge} with  $\tilde X_0 = \lambda$.

Now, let $\delta>0$. Let $\xi>0$ be small enough so that $|H_{u,\xi}-H|\leqslant \delta/2$ for $u\in\{i,e\}$. Then, for all $\sigma>0$, according to \ref{K} one has
\begin{align*}
    \mathbb P \po   \tau_\sigma(\mathcal D ) > \exp\po \frac{2}{\sigma^2}\po H  +\delta \pf \pf \pf \ & \leqslant \ \mathbb P \po   \tilde \tau_\sigma(\mathcal D_{e,\xi} ) > \exp\po \frac{2}{\sigma^2}\po H  +\delta \pf \pf \pf   \\
    &\qquad+ \mathbb P\po \tau_\sigma(\mathcal D  ) > \tilde \tau_\sigma(\mathcal D_{e,\xi} ) \pf\,.
\end{align*}
The choice of $\xi$ and \eqref{eq:Kramertilde} imply that the first term of the right-hand side vanishes with $\sigma$, since $H+\delta \geqslant H_{e,\xi}+\delta/2$. Similarly,
   \[ \mathbb P \po   \tau_\sigma(\mathcal D ) < \exp\po \frac{2}{\sigma^2}\po H  -\delta \pf \pf \pf \   \leqslant \  \mathbb P\po \tau_\sigma(\mathcal D  ) < \tilde \tau_\sigma(\mathcal D_{i,\xi} ) \pf + \underset{\sigma\rightarrow 0}o(1)\,.\]
   As a consequence, the proof will be concluded if we show that 
   \[\mathbb P\po \tau_\sigma(\mathcal D  ) > \tilde \tau_\sigma(\mathcal D_{e,\xi} ) \pf + \mathbb P\po \tau_\sigma(\mathcal D  ) < \tilde \tau_\sigma(\mathcal D_{i,\xi} ) \pf \underset{\sigma\rightarrow 0}\longrightarrow 0\,.\]
   We will prove this in two steps: Considering for $T,\varepsilon>0$ the event
   \[\mathcal A_{T,\varepsilon} \ = \ \Big\{\big[\tau_\sigma(\mathcal D)\wedge \tilde \tau_\sigma(\mathcal D_{i,\xi})\big] >T\text{ and } |X_T^{\sigma} - \tilde X_T^\sigma|\leqslant \varepsilon\Big\}\,,\]
   we will prove in Step 1  that there exist $T_0,\varepsilon,\sigma_0>0$ such that for all $\sigma \in (0,\sigma_0]$ and $T\geqslant T_0$,
   \begin{equation}\label{eq:incompatible}
       \mathbb P\po \{\tau_\sigma(\mathcal D  ) > \tilde \tau_\sigma(\mathcal D_{e,\xi} )\} \cap \mathcal A_{T,\varepsilon} \pf + \mathbb P\po \{\tau_\sigma(\mathcal D  ) < \tilde \tau_\sigma(\mathcal D_{i,\xi} )\} \cap \mathcal A_{T,\varepsilon} \pf \ = \  0\,.
   \end{equation} 
   Then in Step 2 we will show that for all $\varepsilon>0$,
   \[\lim_{T\rightarrow+\infty}\liminf_{\sigma\rightarrow 0}\mathbb P \po  \mathcal A_{T,\varepsilon}\pf \ = \ 1\,.\]
The combination of Step 1 and Step 2 concludes the whole proof.   
   \medskip
   
   \noindent \textbf{Step 1.} Remark that, on the event $\mathcal A_{T,\varepsilon}$,  $\tilde \tau_\sigma(\mathcal D_{e,\xi} )>T$ (since $\tilde \tau_\sigma(\mathcal D_{e,\xi} ) \geqslant \tilde \tau_\sigma(\mathcal D_{i,\xi} )$). As a consequence,
   \[\Big(\{\tau_\sigma(\mathcal D  ) > \tilde \tau_\sigma(\mathcal D_{e,\xi} )\}  \cap \mathcal A_{T,\varepsilon} \Big)\ \subset \ \left\{\sup_{t\geqslant T}|X_t^{\sigma}-\tilde X_t^\sigma|\geqslant \xi\right\} \ :=\ \mathcal B_{T,\xi}\,.\]
  Indeed, if $T< s:=\tilde \tau_\sigma(\mathcal D_{e,\xi})<\tau_\sigma(\mathcal D)$, then  $X_s^{\sigma} \in \mathcal D $ while $\tilde Z_s^\sigma \in \partial \mathcal D_{e,\xi}$, so that $|X_s^{\sigma}-\tilde X_s^\sigma|\geqslant \dd (\mathcal D,\mathcal D_{e,\xi}^c) \geqslant \xi$. With a similar argument  we see that 
     \[ \big ( \{\tau_\sigma(\mathcal D  ) < \tilde \tau_\sigma(\mathcal D_{i,\xi} )\}\cap\mathcal A_{T,\varepsilon}\big ) \ \subset \ \mathcal B_{T,\xi}\,.\]
     It only remains to prove that $\mathbb P(\mathcal A_{T,\varepsilon}\cap \mathcal B_{T,\xi})=0$ for $T$ large enough and $\varepsilon,\sigma$ small enough.
     
     From the first part of Proposition \ref{prop:parallelcoupling}, for all $T \geqslant 0$, $\sigma>0$, almost surely
     \[ \sup_{t\geqslant T} |X_t^{\sigma}-\tilde X_t^\sigma|^2 \ \leqslant \ \max\po |X_T^{\sigma}-\tilde X_T^\sigma|^2 \,, \, \frac{\kappa }{\rho}\sup_{t \geqslant T}\mathbb W_2^2(m_t^\sigma,\delta_\lambda)\pf\,,\]
     and thus (using that $\kappa<\rho$),
     \[ \sup_{t\geqslant T} |X_t^{\sigma}-\tilde X_t^\sigma| \ \leqslant \ \max\po  |X_T^{\sigma}-\tilde X_T^\sigma| \,, \, \sup_{t \geqslant T}\mathbb W_2(m_t^\sigma,\delta_\lambda)\pf\,.\]
     In view of Proposition \ref{prop:longtime}, choose $T_0,\sigma_0>0$ such that 
     \[\sup_{t\geqslant T_0} \sup_{\sigma\in(0,\sigma_0]} \mathbb W_2^2\po m_t^\sigma,\delta_\lambda\pf \ \leqslant \  \xi \,.\]
Set $\varepsilon = \xi/2$. Then, for all $T\geqslant T_0$ and all $\sigma\in(0,\sigma_0]$,
   \[\mathcal A_{T,\varepsilon} \ \subset\ \left\{\sup_{t\geqslant T}|X_t^{\sigma}-\tilde X_t^\sigma|\leqslant \xi/2\right\}\subset \mathcal B_{T,\xi}^{\rm{c}}\,,\]
   which concludes the proof of \eqref{eq:incompatible}. 
   
      \medskip
   
   \noindent \textbf{Step 2.} Fix $T,\varepsilon>0$. We bound
  \[\mathbb P \po  \mathcal A_{T,\varepsilon}^c\pf \ \leqslant \ \mathbb P \po  \tau_\sigma(\mathcal D)\leqslant T\pf + \mathbb P\po  \tilde \tau_\sigma(\mathcal D_{i,\xi}) \leqslant T \pf + \mathbb P \po  |X_T^{\sigma} - \lambda| > \varepsilon/2\pf  + \mathbb P \po |\tilde X_T^\sigma-\lambda|>\varepsilon/2\pf\]
  and treat each term separately. The second term vanishes with $\sigma$ according to \eqref{eq:Kramertilde} (see \ref{K}). The last two terms are similar: from the Markov inequality we get
  \[\mathbb P \po  |\tilde X_T^\sigma - \lambda| > \varepsilon/2\pf \ \leqslant \ \frac{4}{\varepsilon^2}\mathbb W_2^2 \po \mathcal L(\tilde  X_T^\sigma) ,\delta_{\lambda}\pf.\]
Notice that, as $\tilde X_0^{\sigma} = \lambda$, we have that for all $t \ge 0$, $\mathcal L(\tilde X_t^{0}) =\delta_\lambda$. Thus, from the second part of Proposition \ref{prop:parallelcoupling} 
we easily get that the right-hand side vanishes with $\sigma$. Similarly, \[\mathbb P \po  |X_T^\sigma - \lambda| > \varepsilon/2\pf \ \leqslant \ \frac{4}{\varepsilon^2}\mathbb W_2^2 \po m_T^\sigma ,\delta_{\lambda}\pf \ \leqslant \ \frac{4}{\varepsilon^2} \po \mathbb W_2 \po m_T^\sigma,m_\infty^\sigma \pf + \mathbb W_2 \po  m_\infty^\sigma  ,\delta_{\lambda}\pf  \pf^2\]
where $m_\infty^\sigma$ is the equilibrium given by  Corollary \ref{cor:equilibrium}.
Now we apply Proposition \ref{prop:longtime} for the following two processes: the process $X^{\sigma}$ and the process solving \eqref{eq:init} with initial condition $m_\infty^\sigma$, so that  $m_t^{\sigma}= m^{\sigma}_\infty$ for all $t\geqslant 0$.
It comes
\begin{equation*}
\mathbb W_2 \po  m_T^\sigma, m_\infty^\sigma\pf \ \leqslant \ Q(T) \mathbb  W_2  \po   m_0^{\sigma} ,m_\infty^{\sigma}\pf \  \leqslant \  Q(T) \po \mathbb  W_2  \po  m_0^{\sigma},\delta_{\lambda}\pf + \mathbb  W_2  \po \mathcal \delta_{\lambda},m_\infty^\sigma)\pf\pf\,. 
\end{equation*}
  From Proposition \eqref{cor:equilibrium_lownoise},
 we see that $\mathbb  W_2  \po \mathcal \delta_{\lambda},\mu_\infty^{\sigma}\pf$ vanishes with $\sigma$, so that
  \[\limsup_{\sigma\rightarrow 0}\mathbb P \po  |X_T^\sigma - \lambda| > \varepsilon/2\pf \ \leqslant \ \frac{4}{\varepsilon^2} Q^2(T) \mathbb  W_2^2  \po  m_0  ,\delta_{\lambda}\pf\,  .\]
  Finally, we bound 
  \[ \mathbb P \po  \tau_\sigma(\mathcal D)\leqslant T\pf  \ \leqslant \ \mathbb P \po \tau_0(\mathcal D_{i,\xi}) \leqslant T\pf +  \mathbb P \po \tau_0(\mathcal D_{i,\xi}) > T \geqslant  \tau_\sigma(\mathcal D)\pf  \]
The first term of the right hand side vanishes with $\sigma$ and, as in Step 1 of the proof, we bound the second term as 
\[\mathbb P \po \tau_0(\mathcal D_{i,\xi}) > T \geqslant  \tau_\sigma(\mathcal D)\pf \ \leqslant \ \mathbb P \po \sup_{s\in[0,T]} |X_s^\sigma - X_s^0| \geqslant \xi \pf\,,\]
which, from Proposition \ref{prop:small_time_coupling},
vanishes with $\sigma$. 

Gathering all these bounds we have obtained that 
  \[\limsup_{\sigma\rightarrow0} \mathbb P \po  \mathcal A_{T,\varepsilon}^c\pf \ \leqslant \ \frac{4}{\varepsilon^2} Q^2(T) \mathbb  W_2^2  \po m_0 ,\delta_{\lambda}\pf \mathbb  W_2^2  \po \mathcal \delta_{\lambda},m_\infty^{\sigma}\pf \ \underset{T\rightarrow \infty}\longrightarrow\  0\,, \]
  which concludes.

\section{Application to particular models }
\label{Sec:applications}

As discussed in  Section~\ref{sec:assum_mainresult}, \ref{A9} is implied by the condition on $a$
\begin{eqnarray}
\label{eq:a_contract2}
\exists \rho>0,\, \forall z,y\in\R^d\,,\qquad (z-y) \cdot   \po a(z)-a(y) \pf & \leqslant & -\rho |z-y|^2 
\end{eqnarray}
 and by taking an interaction $b=\varepsilon \tilde b$ where $\tilde b$ is Lipschitz continuous and $\varepsilon$ is sufficiently small depending on $\rho$ and the Lipschitz constants of $\tilde b$. Moreover, this condition does not involve the  memory kernel $R$. For this reason, we now present, separately, some non-interacting diffusions (solving \eqref{linear_convex}), then some interacting forces $b$, and check that the conditions hold.

\subsection{Some underlying (linear) diffusions}
\label{subsec:ex-linear-drift}

\subsubsection{The overdamped Langevin diffusion} \label{subsec:overdamped}

The overdamped Langevin diffusion is the Markov diffusion on $\R^d$ which solves
\[dX_t = -\nabla V(X_t)dt + \sqrt{2\sigma} dW_t\]
where $V\in\mathcal C^1(\R^d)$, which corresponds to \eqref{eq:init} with $b=0$, $a=-\nabla V$ and $M=I_d$.
 The following is clear:
\begin{prop}
\label{prop:overdamped}
Suppose that $V$ is strongly convex. Then $a=-\nabla V$ satisfies \eqref{eq:a_contract2}.
\end{prop}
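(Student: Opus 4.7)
The plan is to invoke the standard characterization of strong convexity as monotonicity of the gradient, which is a textbook fact but worth writing out for completeness.

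First, I would fix the meaning of ``strongly convex'': there exists $\rho>0$ such that for all $x,y\in\R^d$,
\[V(y) \ \geqslant\ V(x) + \nabla V(x)\cdot(y-x) + \frac{\rho}{2}|y-x|^2\,,\]
(equivalently, $V - \tfrac{\rho}{2}|\cdot|^2$ is convex, or $\nabla^2 V\geqslant \rho I$ in the $\mathcal C^2$ case).

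Next, I would apply this inequality twice, exchanging the roles of $x$ and $y$:
\[V(y) \ \geqslant \ V(x) + \nabla V(x)\cdot(y-x) + \frac{\rho}{2}|y-x|^2,\qquad V(x) \ \geqslant\ V(y) + \nabla V(y)\cdot(x-y) + \frac{\rho}{2}|x-y|^2.\]
Summing these two lines, the $V(x)$ and $V(y)$ terms cancel and one obtains
\[0 \ \geqslant\ \bigl(\nabla V(x)-\nabla V(y)\bigr)\cdot(y-x) + \rho|x-y|^2,\]
which is exactly
\[(x-y)\cdot\bigl(\nabla V(x)-\nabla V(y)\bigr) \ \geqslant\ \rho|x-y|^2.\]
Rewriting this with $a=-\nabla V$ and renaming $x\to z$ yields $(z-y)\cdot(a(z)-a(y))\leqslant -\rho|z-y|^2$, which is \eqref{eq:a_contract2}.

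There is really no obstacle here: the statement is just the monotonicity characterization of strongly convex functions, and the only thing to do is record the symmetrization trick above. The proof is three lines and requires no further machinery from the paper.
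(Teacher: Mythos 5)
Your argument is correct and is exactly the standard monotonicity-of-the-gradient characterization of strong convexity; the paper itself gives no proof, simply stating the proposition as clear, so your three-line symmetrization is precisely the intended (implicit) argument. Nothing to change.
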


\begin{rem}
For $V$ convex, the unique fixed point of $\dot z = -\nabla V(z)$ is the unique minimum of~$V$.
\end{rem}

\subsubsection{The kinetic Langevin diffusion}
\label{subsec:kinetic}

The kinetic Langevin diffusion corresponds to Markov diffusion on $\R^d$ (with $d=2n$ for some $n\geqslant 1$) that solves \eqref{eq:init} with $b=0$ and (decomposing $z=(x,y)\in\R^n\times\R^n$)
\begin{equation}
\label{eq:drift_kinetic}
a(x,y) \ = \ \begin{pmatrix}
  y \\ -\nabla V(x)   - \gamma y
\end{pmatrix}
\,, \qquad M\ = \ \begin{pmatrix}
0 & 0 \\
0 & \sqrt{2\gamma} I_n
\end{pmatrix}
\end{equation}
where $V\in\mathcal C^1(\R^n)$ and $\gamma>0$. The condition \eqref{eq:a_contract2} is not satisfied directly but, as proven in  \cite{Monmarche:contraction}, provided $V$ is convex, its gradient is Lipschitz continuous and the friction $\gamma$ is high enough, then it is satisfied up to a linear change of variable. More precisely, \cite[Proposition 4]{Monmarche:contraction} reads:

\begin{prop}
\label{prop:kinetic}
Suppose that $V\in\mathcal C^2(\R^n)$ and that there exist $\lambda,\Lambda>0$ with
\[\lambda I_n \ \leqslant \ \na^2 V(x) \ \leqslant \ \Lambda I_n\]
for all $x\in\R^n$. Assume furthermore that $\Lambda-\lambda <  \gamma ( \sqrt{\lambda}+\sqrt{\Lambda})$. Then there exists an invertible $d\times d$ matrix $D$ such that $\tilde a$ given by $\tilde a(z) = D^{-1}a(D z)$ for $z\in\R^d$ satisfies \eqref{eq:a_contract2}.
\end{prop}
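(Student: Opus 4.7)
The plan is to translate the contraction inequality \eqref{eq:a_contract2} for $\tilde{a}$ into a pointwise matrix inequality. Setting $u=Dz$, $u'=Dz'$ and $P=(DD^T)^{-1}$, a direct computation gives
\[(z-z')\cdot(\tilde{a}(z)-\tilde{a}(z'))=(u-u')^{T}P\bigl(a(u)-a(u')\bigr),\qquad |z-z'|^{2}=(u-u')^{T}P(u-u'),\]
so the inequality for $\tilde a$ with rate $\rho$ is equivalent to the symmetric matrix inequality
\[P\,Da(w)+Da(w)^{T}P\leq -2\rho P\qquad\forall w\in\R^{d},\]
where $Da(x,y)=\begin{pmatrix}0 & I_{n}\\ -\nabla^{2}V(x) & -\gamma I_{n}\end{pmatrix}$. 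The entire $w$-dependence thus enters only through the Hessian $\nabla^{2}V(x)$.

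To exploit the two-sided bound $\lambda I_{n}\leq \nabla^{2}V(x)\leq \Lambda I_{n}$, I would look for $P$ with scalar $n\times n$ blocks,
\[P=\begin{pmatrix}\alpha I_{n} & \beta I_{n}\\ \beta I_{n} & \mu I_{n}\end{pmatrix},\]
which is positive definite whenever $\alpha,\mu>0$ and $\alpha\mu>\beta^{2}$. With this ansatz, every block of $P\,Da(w)+Da(w)^{T}P+2\rho P$ is a linear combination of $I_{n}$ and $\nabla^{2}V(x)$, so the $2d\times 2d$ inequality commutes with the spectral decomposition of $\nabla^{2}V(x)$ and decouples into $n$ independent $2\times 2$ inequalities: one must show that a symmetric matrix $M(\nu;\alpha,\beta,\mu,\rho)$, with entries linear in the eigenvalue $\nu$ of $\nabla^{2}V(x)$, is negative semidefinite for every $\nu\in[\lambda,\Lambda]$.

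The main technical step is to choose the four parameters so that $M(\nu)\leq 0$ on $[\lambda,\Lambda]$ with $\rho>0$. A convenient simplification is $\alpha=\gamma\beta$, which kills the non-Hessian part of the off-diagonal entry. Sylvester's criterion then reduces the requirement to two linear conditions on the diagonal entries (easily satisfied by taking $\beta$ small enough) and a single quadratic-in-$\nu$ determinant condition which must stay nonnegative on $[\lambda,\Lambda]$. Because a quadratic achieves its extrema on a compact interval at the endpoints or at the critical point, testing the determinant at $\nu=\lambda$ and $\nu=\Lambda$ and then optimizing over the ratio $\beta/\mu$ should pin down the sharp threshold $\Lambda-\lambda<\gamma(\sqrt{\lambda}+\sqrt{\Lambda})$ for the existence of a strictly positive $\rho$. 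I expect this endpoint optimization---matching the determinant condition to the stated hypothesis on $\gamma$---to be the main obstacle, as it is precisely where the assumption is used. Once $P$ is fixed, one takes $D$ to be the positive symmetric square root of $P^{-1}$.
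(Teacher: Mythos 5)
First, a point of comparison: the paper does not prove this proposition at all --- it is quoted verbatim from the cited reference \cite[Proposition 4]{Monmarche:contraction} --- so there is no internal proof to match. Your overall strategy is the standard one for such statements and its skeleton is sound: the identity $(z-z')\cdot(\tilde a(z)-\tilde a(z'))=(u-u')^{T}P\,(a(u)-a(u'))$ with $P=(DD^{T})^{-1}$, the equivalence of \eqref{eq:a_contract2} for $\tilde a$ with the matrix inequality $P\,Da(w)+Da(w)^{T}P\leqslant-2\rho P$, the scalar-block ansatz for $P$, the decoupling into $2\times2$ conditions indexed by the eigenvalues $\nu\in[\lambda,\Lambda]$ of $\nabla^{2}V(x)$ (legitimate since every block is a polynomial in the Hessian), the endpoint analysis of the concave-in-$\nu$ determinant, and finally $D=P^{-1/2}$.

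There is, however, a genuine flaw at the step where you set $\alpha=\gamma\beta$: this kills precisely the degree of freedom needed to reach the stated threshold. Writing $P=\bigl(\begin{smallmatrix}\alpha I_n&\beta I_n\\ \beta I_n&\mu I_n\end{smallmatrix}\bigr)$, normalizing $\mu=1$ and letting $\rho\to0^{+}$ (a strict inequality for $\rho=0$ suffices, by compactness of $[\lambda,\Lambda]$), the $2\times2$ condition reads $0<\beta<\gamma$ together with $(c-\nu)^{2}<4\beta(\gamma-\beta)\,\nu$ for all $\nu\in[\lambda,\Lambda]$, where $c:=\alpha-\gamma\beta$. The left-hand side minus the right-hand side is convex in $\nu$, so only the endpoints matter, and with the optimal $\beta=\gamma/2$ (so $4\beta(\gamma-\beta)=\gamma^{2}$) the two endpoint constraints ask for a $c$ in $(\lambda-\gamma\sqrt{\lambda},\lambda+\gamma\sqrt{\lambda})\cap(\Lambda-\gamma\sqrt{\Lambda},\Lambda+\gamma\sqrt{\Lambda})$, an intersection which is nonempty exactly when $\Lambda-\lambda<\gamma(\sqrt{\lambda}+\sqrt{\Lambda})$; choosing $c>0$ there (possible since the right endpoint $\lambda+\gamma\sqrt{\lambda}$ is positive) also gives $\alpha=c+\gamma^{2}/2>\beta^{2}$, hence $P>0$. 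Your choice $\alpha=\gamma\beta$ forces $c=0$, and then the constraint at $\nu=\Lambda$ becomes $\Lambda^{2}<4\beta(\gamma-\beta)\Lambda\leqslant\gamma^{2}\Lambda$, i.e. $\gamma>\sqrt{\Lambda}$ --- a strictly stronger hypothesis (e.g. $\lambda=\Lambda=1$, $\gamma=1/2$ satisfies the proposition's assumption but defeats your ansatz). So the fix is simply to keep the constant part $c$ of the off-diagonal entry as a free parameter rather than eliminating it; with that correction the computation does deliver the sharp condition and a strictly positive $\rho$.
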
 

\begin{rem}
In the setting of Proposition~\ref{prop:kinetic}, the unique fixed point of $\dot z = a(z)$ is $z_*=(x_*,0)$ where $x_*$ is the unique minimum of $V$ (in other words, $z_*$ is the unique minimum of the Hamiltonian $V(x)+|y|^2/2$).
\end{rem}

\subsubsection{Overdamped Langevin diffusion with coloured noise}

Consider a variation of the overdamped Langevin process where the white noise is replaced by a diffusion process, i.e.
\begin{subequations}
\label{eq: mainSDE}
\begin{align}
dX_t&=-\nabla V(X_t)dt+ B \eta_t\,dt
\\d\eta_t&=F(\eta_t)\,dt+\sigma D dW_t,
\end{align}
\end{subequations}
where $X_t\in \R^d, \eta_t\in \R^n$, $B\in \R^{d\times n}$, $F\in \mathcal C^1(\R^{n},\R^n)$, $D\in \R^{n\times n}$. It corresponds to coefficients
\begin{equation}\label{eq:drift_kinetic2}
a(x,\eta) \ = \ \begin{pmatrix}
-\nabla V(x) + B\eta \\   F(\eta)
\end{pmatrix}
\,, \qquad M\ = \ \begin{pmatrix}
0 & 0 \\
0 & D
\end{pmatrix}
\end{equation}

Important instances of this model include
\begin{enumerate}
\item Scalar OU process: 
\begin{subequations}
\label{eq: OU}
\begin{align}
dX_t&=-\nabla V(X_t)dt+\sqrt{2} \eta_t\,dt
\\d\eta_t&=-\eta_t\,dt+\sqrt{2}\sigma\, dW_t,
\end{align}
\end{subequations}
\item Langevin noise: 
\begin{subequations}
\label{eq: H}
\begin{align*}
dX_t&=-\nabla V(X_t)dt+\sqrt{2} \, y_t\,dt
\\ dy_t&=v_t
\\ dv_t&=-y_t\,dt-\gamma v_t\,dt+\sqrt{2}\sigma\, dW_t.
\end{align*}
\end{subequations}
\end{enumerate}
Notice that, in these two examples, $F$ is linear, so that by considering a rescaled auxiliary variable $\tilde \eta_t=\eta_t/\sigma$ we end up with 
\[d X_t = -\nabla V(X_t)dt + \sqrt 2 \sigma B\tilde \eta_t,\]
where the dynamics of $\tilde \eta$ is independent from $\sigma$. This is more similar to the standard overdamped Langevin with small white noise, however it doesn't fit in this form in the framework of our results  where $\sigma$ is the intensity of the white noise, which is why we wrote it with $\eta$ rather than $\tilde \eta$.

\begin{prop}
\label{prop:colored}
Assume that $V$ is strongly convex, that $F$ satisfies \eqref{eq:a_contract2} and let $a$ be given by \eqref{eq:drift_kinetic2}. Then there exists a diagonal  $d\times d$ matrix $D$ such that $\tilde a$ given by $\tilde a(z) = D^{-1}a(D z)$ for $z\in\R^d$ satisfies \eqref{eq:a_contract2}.
\end{prop}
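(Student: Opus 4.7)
The plan is to look for a diagonal rescaling that treats the position coordinate $x$ and the auxiliary coordinate $\eta$ with different weights, so as to dampen the off-diagonal coupling through $B$ without spoiling the diagonal contractivities. Concretely, I would try $D = \mathrm{diag}(I_d, \beta I_n)$ acting on $\R^{d+n}$, with a small parameter $\beta>0$ to be chosen later. A direct computation gives
\[\tilde a(x,\eta) \;=\; D^{-1} a(Dz) \;=\; \po -\na V(x) + \beta B \eta, \; \beta^{-1} F(\beta \eta) \pf.\]

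Next, I would compute $(z-z') \cdot (\tilde a(z) - \tilde a(z'))$ for $z=(x,\eta)$ and $z'=(x',\eta')$ and decompose it into three terms. The strong convexity of $V$ (with modulus $\rho_V>0$) contributes $-(x-x')\cdot(\na V(x)-\na V(x')) \leq -\rho_V |x-x'|^2$. The contraction hypothesis \eqref{eq:a_contract2} on $F$ applied at the rescaled pair $(\beta\eta, \beta\eta')$ yields, after cancellation of the $\beta$-factors, $\beta^{-1}(\eta-\eta')\cdot(F(\beta\eta) - F(\beta\eta')) \leq -\rho_F |\eta-\eta'|^2$. Finally, Cauchy--Schwarz followed by Young's inequality with parameter $\varepsilon > 0$ bounds the cross term by
\[\beta (x-x')\cdot B(\eta-\eta') \;\leq\; \frac{\beta \varepsilon}{2}|x-x'|^2 + \frac{\beta \|B\|^2}{2\varepsilon}|\eta-\eta'|^2.\]

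Summing the three contributions, \eqref{eq:a_contract2} for $\tilde a$ holds (with rate $\min(\rho_V - \beta\varepsilon/2,\; \rho_F - \beta\|B\|^2/(2\varepsilon))$) provided both diagonal coefficients are strictly positive. Multiplying the two constraints gives the single condition $\rho_V \rho_F > \beta^2 \|B\|^2 / 4$, which (upon choosing the optimizing $\varepsilon = \|B\| \sqrt{\rho_V/\rho_F}$) is ensured by any $\beta \in (0, 2\sqrt{\rho_V \rho_F}/\|B\|)$; if $B=0$ any $\beta>0$ works.

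The argument is essentially bookkeeping and I expect no substantive obstacle. The only conceptual point worth underlining is that scaling only the $\eta$-block by a small factor $\beta$ plays a double role: it produces the prefactor $\beta$ in front of the off-diagonal $B\eta$ contribution (shrinking it as much as needed), while leaving the contraction rates of $V$ and $F$ unchanged after the appropriate $\beta$-factor cancellations. This is exactly what is needed to restore the global one-sided Lipschitz estimate \eqref{eq:a_contract2}.
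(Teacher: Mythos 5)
Your proof is correct and takes essentially the same route as the paper: a block-diagonal rescaling of the state followed by the three-term decomposition and Young's inequality to absorb the cross term $B\eta$ into the two contraction terms. The paper rescales the $x$-block by the explicit constant $c=|B|^2/(\rho\rho')$ rather than shrinking the $\eta$-block by a small $\beta$, but the computation is the same in substance (and your bookkeeping of the $D^{-1}$ factor is, if anything, closer to the literal statement).
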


\begin{rem}
In the case of a Langevin noise, $F$ does not satisfies \eqref{eq:a_contract2} directly but, as discussed in the previous section, we can enforce \eqref{eq:a_contract2} by a linear change of variable. More generally, when $F$ is linear, given by a matrix with spectrum in $\{\lambda\in\mathbb C,\mathfrak{R}e(\lambda)<0\}$, it is known that \eqref{eq:a_contract2} holds up to a linear change of variable.
\end{rem}

\begin{proof}
 Assume that \eqref{eq:a_contract2} holds for $F$ for some $\rho$ and that $V$ is $\rho'$-strongly convex. Let  $\tilde a(x,\eta)=a(c x,\eta)$ with $c=|B|^2/(\rho\rho')$. Then, for all $z=(x,\eta),z'=(x',\eta')\in\R^d\times\R^n$,
\begin{align*}
(z-z')\cdot \po \tilde a(z) - \tilde a(z') \pf & = (x-x')\cdot\po \na V(c x')-\na V(c x) + B(\eta-\eta')\pf +(\eta-\eta')\po F(\eta)-F(\eta')\pf\\
&\leqslant -c\rho' |x-x'|^2 + |B||x-x'||\eta-\eta'| - \rho |\eta-\eta'|^2\\
& \leqslant -\frac{c\rho'}{2}|x-x'|^2 -\frac{\rho}{2}|\eta-\eta'|^2\,.
\end{align*}

\end{proof}

\subsubsection{The generalized Langevin diffusion}

The generalized Langevin diffusion corresponds to Markov diffusion on $\R^d$ (with $d=2n+p$ for some $n,p\geqslant 1$) that solves \eqref{eq:init} with $b=0$ and (decomposing $z=(x,y,w)\in\R^n\times\R^n\times \R^p$)
\begin{equation}\label{eq:drift_kinetic3}
a(x,y,w) \ = \ \begin{pmatrix}
  y \\ -\nabla V(x)    \\
 0
\end{pmatrix} - \gamma \begin{pmatrix}
0 & 0 & 0 \\
0 & B_{11} & B_{12}\\
0 & B_{21} & B_{22}
\end{pmatrix} \begin{pmatrix}
x \\ y \\ w
\end{pmatrix}
\,, \quad M\ = \ \sqrt{\gamma} \begin{pmatrix}
0 & 0 & 0   \\
0 & \Sigma_{11} & \Sigma_{12}\\
0 & \Sigma_{21} & \Sigma_{22}
\end{pmatrix} 
\end{equation}
where $V\in\mathcal C^1(\R^n)$,  $\gamma>0$ and the $\Sigma_{i,j}$'s and $B_{i,j}$'s are constant matrices. These processes arise in Monte Carlo method \cite{Leimkuhler,Pavliotis,Monmarche:contraction} and in effective dynamics problem in molecular dynamics \cite{Stella,CBP:GLE}. The fluctuation-dissipation relation is said to be satisfied if
\[\Sigma^T \Sigma \ = \ B^T+B\qquad \text{where} \qquad  B \ = \ \ \begin{pmatrix}
  B_{11} & B_{12}\\
 B_{21} & B_{22}
\end{pmatrix}, \qquad  \Sigma  \ = \ \ \begin{pmatrix}
  \Sigma_{11} & \Sigma_{12}\\
 \Sigma_{21} & \Sigma_{22}
\end{pmatrix}\,.\]
In that case, the invariant measure of the process is the probability density proportional to $\exp(-[V(x)+|y|^2/2+|w|^2/2]/\sigma^2)$. Classical examples are $K^{th}$ order Generalized Langevin processes for $K\geqslant 3$, which corresponds to $p=(K-2)n$, i.e. the total dimension is $d=Kn$, and, decomposing $B$ in $n\times n$ blocks,
 \[
 B =  \gamma \begin{pmatrix}
0  & -I_n & 0 & \dots & 0 \\
 I_n & 0 & -I_n & \ddots & \vdots \\
0 & \ddots & \ddots & \ddots &   0 \\
\vdots & \ddots &   I_n & 0 & - I_n\\
0 &\dots  & 0 &  I_n &   I_n
\end{pmatrix}\,.\]
In other words, the process solves
\begin{eqnarray*}
d X_t & =& Y_t dt\\
d Y_t & = & -\nabla V(X_t) dt +\gamma Z^{(1)}_t dt\\
d Z^{(1)}_t & = & \gamma \po Z^{(2)}_t - Y_t\pf dt\\
d Z^{(2)}_t & = & \gamma \po Z^{(3)}_t - Z^{(1)}_t\pf dt\\
& \vdots & \\
d Z^{(K-3)}_t & = & \gamma \po Z^{(K-2)}_t - Z^{(K-3)}_t\pf dt\\
d Z^{(K-2)}_t & = & -\gamma \po Z^{(K-2)}_t + Z^{(K-3)}_t\pf dt + \sqrt{2\gamma} dW_t\,.\\
\end{eqnarray*}
Here, the SDE is very degenerated, since a $d$-dimensional noise drives a $Kd$-dimensional process. See \cite{Pavliotis,Leimkuhler} and references for more detailed discussions and more examples.

Similarly to the kinetic Langevin case, assuming that $V$ is strongly convex with bounded Hessian and that the friction is large enough, it can be proven that, up to a linear change of variables, $a$ given by \eqref{eq:drift_kinetic3} satisfies \eqref{eq:a_contract2}, see \cite[Proposition 5]{Monmarche:contraction} for details.

\begin{rem} The unique fixed point of $\dot z = a(z)$ where $a$ is given by \eqref{eq:drift_kinetic3} is $z_*=(x_*,0,0)$ where $x_*$ is the unique minimum of $V$ (in other words, $z_*$ is the unique minimum of the Hamiltonian $V(x)+|y|^2/2+|w|^2/2$).
\end{rem}

\subsection{Examples of interactions}
\label{subsec:ex-interact}

\subsubsection{Interaction potential}\label{sec:interactionpotential}
One of the most classical interaction mechanism is given by
\begin{equation}
    \label{eq:bpotential}
    b(z,\mu) \ = \ A \int_{\R^d} \na_z W(z,y)   \mu(\dd y)
\end{equation}
where $W\in \mathcal C^1(\R^d\times \R^d)$ and $A$ is a $d\times d$ constant matrix. For instance, in the case of the overdamped Langevin diffusion (see Section~\ref{subsec:overdamped}), typically $A=-I_d$, and in the case of the kinetic Langevin diffusion  (with $d=2n$, see Section~\ref{subsec:kinetic}),
\[A \ = \ \begin{pmatrix}
0 & 0 \\
-I_n & 0 
\end{pmatrix}\]
and $W$ is given by $W((x,y),(x',y')) = \tilde W(x,x')$ for some $\tilde W\in\mathcal C^1(\R^n\times \R^n)$.

\begin{prop}
Suppose that $\na_z W$ is a Lipschitz function. Then $b$ given by \eqref{eq:bpotential} satisfies Assumption~\ref{A}.
\end{prop}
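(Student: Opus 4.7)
The plan is to verify the two parts of \ref{A} that involve the interaction force $b$, namely the global Lipschitz continuity required by \ref{A8} and the one-sided contraction \ref{A9}, the latter being checked in conjunction with the assumption that the underlying drift $a$ satisfies \eqref{eq:a_contract2} (as is the case for all examples from Section~\ref{subsec:ex-linear-drift}, possibly after the linear change of variables discussed there).

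First I would establish the Lipschitz estimate \eqref{pairet}. Let $L$ denote the Lipschitz constant of $\nabla_z W$ on $\R^d \times \R^d$. Given $z, y \in \R^d$ and $\mu, \nu \in \mathcal P_2(\R^d)$, let $\pi$ be any coupling of $(\mu, \nu)$. Then
\[b(z, \mu) - b(y, \nu) \ = \ A \int_{\R^d \times \R^d} \left( \nabla_z W(z, u) - \nabla_z W(y, v) \right) \pi(\dd u, \dd v).\]
The Lipschitz bound $|\nabla_z W(z, u) - \nabla_z W(y, v)| \leq L(|z-y| + |u-v|)$ together with Jensen's inequality applied to the probability measure $\pi$ yields
\[|b(z, \mu) - b(y, \nu)| \ \leq \ \|A\| L |z-y| + \|A\| L \left( \int_{\R^d \times \R^d} |u-v|^2 \pi(\dd u, \dd v) \right)^{1/2}.\]
Taking the infimum over all couplings $\pi$ of $(\mu, \nu)$ produces \eqref{pairet} with $\kappa_1 = \kappa_2 = \|A\| L$. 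In particular $b$ is globally Lipschitz continuous, so \ref{A8} holds.

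Second, assume that $a$ satisfies \eqref{eq:a_contract2} with some rate $\tilde{\rho} > 0$. Combining this with the previous bound on $b$, and using the elementary inequality $\|A\| L |z-y| \mathbb{W}_2(\mu,\nu) \leq \tfrac{\|A\| L}{2} |z-y|^2 + \tfrac{\|A\| L}{2} \mathbb{W}_2^2(\mu,\nu)$, we obtain
\[(z-y) \cdot \left( a(z)+b(z,\mu) - a(y)-b(y,\nu) \right) \ \leq \ -\left(\tilde{\rho} - \tfrac{3}{2}\|A\|L\right) |z-y|^2 + \tfrac{\|A\|L}{2}\,\mathbb{W}_2^2(\mu,\nu),\]
which is exactly \ref{A9} with $\rho = \tilde\rho - \tfrac{3}{2}\|A\|L$ and $\kappa = \tfrac{\|A\|L}{2}$, under the smallness condition $2 \|A\| L < \tilde{\rho}$. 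This matches the general principle stated at the beginning of Section~\ref{Sec:applications} that \ref{A9} holds when the interaction $b = \varepsilon \tilde b$ is taken with $\varepsilon$ small enough relative to $\tilde\rho$ and the Lipschitz constants of $\tilde b$.

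The argument is routine, no step should present a genuine obstacle. The only minor subtlety is the use of Kantorovich duality (infimum over couplings) to extract the $\mathbb{W}_2$ dependence in \eqref{pairet}, and the explicit quantitative interplay between $\|A\|$, $L$, and $\tilde\rho$ required to ensure $\rho > \kappa$ in \ref{A9}; this fixes the range of admissible interaction strengths $A$ for which the main result of the paper applies to the potential interaction \eqref{eq:bpotential}.
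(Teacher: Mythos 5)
Your proof of the Lipschitz estimate \eqref{pairet} is exactly the paper's argument (couple $\mu$ and $\nu$, apply the Lipschitz bound on $\na_z W$, use Jensen, optimize over couplings to get $\kappa_1=\kappa_2=\|A\|L$); the paper stops there and leaves the verification of \ref{A9} to the general smallness principle stated at the start of Section~\ref{Sec:applications}, which your explicit Young-inequality computation merely spells out with the same resulting condition $2\|A\|L<\tilde\rho$. So the proposal is correct and essentially coincides with the paper's proof (minor quibble: taking the infimum over couplings is just the definition of $\mathbb W_2$, not Kantorovich duality).
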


\begin{proof}
Let $L>0$ be such that
\[|\na_z W(z,y)-\na_z W(z',y')| \ \leqslant \ L\po |z-z'|+|y-y'|\pf \]
for all $z,z',y,y'\in\R^d$.

Let $z,y\in\R^d$ and $\mu,\nu\in\mathcal P_2(\R^d)$. 
Let $(Z,Y)$ be a $\mathbb W_2$-optimal coupling of $\mu$ and $\nu$. Then
\begin{align*}
    |b(z,\mu)-b(y,\nu)| \ &= \ \left|\mathbb E \po A\na_z W(z,Z) - A\na_z W(y,Y)\pf\right|\\
    & \leqslant\ \|A\| L \po |y-z| + \mathbb E\po |Z-Y|\pf \pf  \ \leqslant \ \|A\|L \po |y-z| + \mathbb W_2(\nu,\mu)\pf\,. 
\end{align*}
\end{proof}

\begin{rem}
In most classical cases, $W(z,y)=\tilde W(z-y)$ with an even $\tilde W \in \mathcal C^1(\R^d)$.  Then, $b(z,\delta_z) = 0$ for all $z\in\R^d$, and thus $a(\lambda)+b(\lambda,\delta_\lambda)=0$ if and only if $\lambda$ is a fixed point of $\dot z = a(z)$.
\end{rem}

\subsubsection{The Adaptive Biasing Potential algorithm}

The standard adaptive biasing algorithms used in molecular dynamics do not use a repulsive bias which is isotropic in the whole space. Rather, they tend to bias only a small dimensional part of the space, described by so-called reaction coordinates or collective variables. Reaction coordinates are given by $\xi:\R^d\rightarrow \R^p$ with $p\ll d$ (typically, in a full-atom simulation, $d=3N$ with $N$ the number of atoms which may be of order $10^6$, while $p=1$ or $2$). For instance, if $z$ is the positions of $N$ atoms, $\xi(z)$ may be the distance between two particular atoms. We refer to \cite{LelievreABF} for more details and motivations of the use and design of reaction coordinates, in particular for enhanced sampling with self-biasing processes. A key point is that good reaction coordinates are meant to encode most of the metastability of the system, namely, if all the statistically representative values of $\xi(z)$ along a trajectory have been visited then  this should also be the case for $z$. For this reason, adaptive algorithms based on reaction coordinates are designed so that, at stationarity, the law of $\xi(z)$ is unimodal. This is the case for algorithms such as the metadynamics \cite{Metadyn1,Metadyn2,Metadyn3}, Adaptive Biasing Force (ABF) \cite{LelievreABF} or Adaptive Biasing Potential (ABP) \cite{BBM2020} methods.

Let us focus on the ABP method here. For a small  $\varepsilon>0$, consider a Gaussian kernel $K_\varepsilon(x)=e^{-|x|^2/(2\varepsilon)}/(2 \pi \varepsilon)^{p/2}$ on $\R^p$. If $Z$ is a random variable on $\R^d$ with law $\mu$, then a smooth approximation for the law of $\xi(Z)$ is given by the density
\[\rho_\mu(x) = \int_{\R^d} K_\varepsilon\po x-\xi(z)\pf \mu(dz)\,.\]
The ABP algorithm (or more precisely its mean field limit) can then be written as the self-interacting process \eqref{eq:init} with interaction given by
\[b(z,\mu) = \omega A\nabla \ln (\varepsilon' + \rho_\mu \circ \xi)(z)\,,\]
where the matrix $A$ is as in the previous section (depending whether we consider an overdamped or a kinetic Langevin dynamics), $\varepsilon'>0$ is another small regularisation parameter and $\omega\in[0,1]$ parametrizes the strength of the bias. It is straightforward to check that $b$ is Lipschitz continuous. However, it should be noticed that the Lipschitz constants  depends on $\varepsilon,\varepsilon'$ and blow up as these regularization parameters vanish. Due to the condition $\rho>\kappa$ in \ref{A9}, for a fixed value of $\omega$, applying our results to the ABP algorithm prevents $\varepsilon$ and $\varepsilon'$ to be too small, which is a limitation in term of practical application of the algorithm.

\subsubsection{Interaction between two different species}

In \cite{DT3}, a two-species model in the form of a coupled system of nonlinear stochastic differential equations has been studied. It corresponds to the function $b$ on $\R^d\times \mathcal P_2(\R^d)$ with $\R^d=\R^n\times\R^n$ of the form
\[b((x,y),\nu) = \begin{pmatrix}
\int_{\R^d} \po b_{11}(x-x') + b_{12}(x-y')\pf \nu(\dd x',\dd y')\\
\int_{\R^d} \po b_{21}(y-x') + b_{12}(y-y') \pf \nu(\dd x',\dd y')
\end{pmatrix}\]
where the functions $b_{ij}:\R^n\rightarrow\R^n$ are Lipschitz continuous. It follows that $b$ is Lipschitz continuous, as in Section~\ref{sec:interactionpotential}.

\subsection{Kramers' law for linear processes}
\label{Sec:LDP_linear}

In this section, we gather some classical results which can be used to check Assumption \ref{K}. Standard references for large deviation principles and Kramers' law are the books of Freidlin and Wentzell \cite{FW} and of Dembo and Zeitouni \cite{DZ}.

Following \cite{DZ}, we now define the quasi-potential associated to the process \eqref{roserouge}. First, for $x,y\in\R^d$ and $t>0$, consider the cost function 
\[V(x,y,t) = \inf \frac14\int_0^t |u(s)|^2 ds\,,\]
where the infimum runs over all $u\in L^2([0,t])$ such that $z_t=y$ where $(z_s)_{s\in[0,t]}$ is the solution of $z_s=x+\int_0^s(a(z_w)+b(z_w,\delta_\lambda)+\Sigma u_w) dw,\ s\in[0,t]$. The quasi-potential for \eqref{roserouge} is then defined as
\[\overline{V}(x) = \inf_{t>0} V(\lambda,x,t)\,.\]
Then, in our framework, \cite[Theorem 5.7.11]{DZ} reads as follows.

\begin{thm}\label{thm:DZ}
Under \ref{A}, assume furthermore the following:
\begin{enumerate}
    \item The domain $\mathcal D$ is open, bounded and positively invariant for the vector field $a+b(\cdot,\delta_\lambda)$.
    \item $H:=\inf_{z\in\partial \mathcal D} \overline{V}(z) < \infty$.
    \item There exists $M>0$ and a function $T:\R_+\rightarrow \R_+$  with $T(s)\rightarrow 0$ as $s\rightarrow 0$ such that, for all $\varepsilon>0$ small enough and all $x,y\in\R^d$ with $|x-z|+|y-z|\leqslant \varepsilon$ for some $z\in\partial\mathcal D\cup\{\lambda\}$, there exists a function $u :[0,T(\varepsilon)]\rightarrow \R^d$ with $\|u\|_{L^2} \leqslant M$ such that $z_{T(\varepsilon)}=y$ where $z$ solves $z_s=x+\int_0^s(a(z_w)+b(z_w,\delta_\lambda)+\Sigma u_w) dw,\ s\in[0,T(\varepsilon)]$.
\end{enumerate}
Then, for all $x\in \mathcal D$ and all $\delta>0$:
\begin{equation}\label{eq:K}
\mathbb P_x \po \exp\po \frac{2}{\sigma^2}\po H -\delta \pf\pf  \leqslant \tilde \tau_\sigma(\mathcal D) \leqslant \exp\po \frac{2}{\sigma^2}\po H +\delta \pf \pf \pf \underset{\sigma\rightarrow0}\longrightarrow 1\,.
\end{equation}
\end{thm}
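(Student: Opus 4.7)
The plan is to cast the time-homogeneous diffusion $\tilde X^\sigma$ solving \eqref{roserouge} into the standard Freidlin--Wentzell framework and then invoke \cite[Theorem 5.7.11]{DZ} as suggested in the statement itself. First, for every fixed $T>0$, I would establish the sample-path large deviation principle for the family $(\tilde X^\sigma)_{\sigma>0}$ on $C([0,T],\R^d)$ with good rate function
\[ I_T(\phi) \;=\; \inf\Bigl\{\, \tfrac14 \int_0^T |u_s|^2 \dd s \;:\; \dot\phi_s = a(\phi_s)+b(\phi_s,\delta_\lambda)+ M u_s,\ \phi_0 = \lambda \,\Bigr\}. \]
Under \ref{A}, the drift $\tilde a := a+b(\cdot,\delta_\lambda)$ is locally Lipschitz and the dissipative bound in \ref{A9} rules out explosion; the classical Schilder-type contraction argument applied to the driving noise $\sigma M B_\cdot$ (see, e.g., \cite[Thm.~5.6.7]{DZ}) then yields this LDP with the prescribed rate.

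With the LDP in hand, the quasi-potential $\overline V$ defined in the excerpt coincides with the standard Freidlin--Wentzell cost from $\lambda$, and $H = \inf_{\partial \mathcal D} \overline V$ is the exponential barrier height. I would then verify that the three assumptions of the statement correspond to the hypotheses of \cite[Thm.~5.7.11]{DZ}. Condition (1), combined with the contraction in Lemma~\ref{cor:deterministic_flow} applied at $v=\lambda$, guarantees that every zero-noise trajectory starting in $\overline{\mathcal D}$ remains there and is attracted to $\lambda$, so $\lambda$ is the unique stable equilibrium in $\mathcal D$ and exiting is genuinely a rare event. Condition (2) provides a finite exponential rate. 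Condition (3) is the usual controllability assumption ensuring that any two nearby points located close to $\lambda$ or close to $\partial\mathcal D$ can be joined in short time by a bounded-action trajectory; this is what makes the Freidlin--Wentzell lower bound effective, since it allows one to complete a near-optimal path into a true exit while keeping the additional cost arbitrarily small, and to reset the process to a neighbourhood of $\lambda$ after each aborted attempt in the upper bound.

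The conclusion then follows by directly invoking \cite[Thm.~5.7.11]{DZ}: for any $x \in \mathcal D$ and any $\delta>0$, $\sigma^2 \log \tilde \tau_\sigma(\mathcal D)$ concentrates in probability on $2H$, which is exactly the restatement \eqref{eq:K}.

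The main obstacle, and the reason the authors prefer to encapsulate this into the black-box Assumption~\ref{K} rather than work it out in full generality, is the possibly degenerate nature of $M$ (as for the kinetic Langevin case of Section~\ref{subsec:kinetic}, where noise acts only on the velocity variable). In such non-elliptic settings, $I_T(\phi)$ is $+\infty$ unless $\dot\phi - \tilde a(\phi)$ lies in the range of $M$, so Condition (3) amounts to a small-time controllability property of the control system $(\tilde a, M)$ and typically requires a H\"ormander-type bracket-generating condition around $\lambda$ and along accessible portions of $\partial\mathcal D$. Moreover, one must identify the characteristic part of $\partial\mathcal D$ (where $\tilde a$ points strictly inward) to ensure that $H=\inf_{\partial\mathcal D}\overline V$ is attained on the noise-accessible boundary. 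These verifications are genuinely case-dependent, which is why the theorem is stated conditionally rather than with verifiable hypotheses covering all the examples of Section~\ref{Sec:applications} at once.
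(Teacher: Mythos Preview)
Your proposal is correct and follows essentially the same approach as the paper: the authors also treat Theorem~\ref{thm:DZ} as a direct restatement of \cite[Theorem 5.7.11]{DZ}, verifying only that positive invariance of $\mathcal D$ together with Lemma~\ref{cor:deterministic_flow} (applied with $v=\lambda$) forces $\lambda\in\mathcal D$ and global attraction to $\lambda$, exactly as you argue. Your additional remarks on the sample-path LDP and on the degenerate-$M$ case are consistent with the paper's discussion but go slightly beyond what the authors spell out.
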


Indeed, $\mathcal D$ being positively invariant for $a+b(\cdot,\delta_\lambda)$, from Lemma~\ref{cor:deterministic_flow}, we get that necessarily $\lambda\in\mathcal D$ and all solutions of $\dot z=a(z)+b(z,\delta_\lambda)$ converges to $\lambda$, which is required to apply  \cite[Theorem 5.7.11]{DZ}. The third condition of the theorem is a controllability assumption, which is in particular always satisfied if $\Sigma$ is non-singular, see \cite[Exercise 5.7.29]{DZ}.

In order to get \A{K}, we need to apply Theorem~\ref{thm:DZ} to the domains $\mathcal D_{u,\xi}$. In particular these approximations of $\mathcal D$ should be positively invariant, which can be done following the construction of 
\cite[Definition 2.1, Proposition 2.2]{EJP}. The last thing to check is that $H_{u,\xi} = \inf_{z\in\partial \mathcal D_{u,\xi}} \overline{V}(z)$ converge to $H$ as $\xi$ vanishes, for $u\in\{e,i\}$. This follows from the continuity of $\bar V$ in the vicinity of $\partial D$, which is a consequence of the third assumption of Theorem~\ref{thm:DZ}, as proven in \cite[Lemma 5.7.8]{DZ}.

\medskip

 In the kinetic Langevin case (as in Section~\ref{subsec:kinetic}) the third  assumption of Theorem~\ref{thm:DZ} does not hold, and one would like to consider unbounded domains $\mathcal D=\mathcal D'\times\R^n$ where $\mathcal D'$ is bounded (i.e.  we are interested in the exit time of the position of the kinetic process from a given domain). In the case where
 \begin{equation}\label{eq:KramLangevin}
a(x,y)+b((x,y) ,\delta_\lambda) = \begin{pmatrix}
    y \\
    -c(x) - \gamma y
    \end{pmatrix}\,,\qquad 
    \Sigma = \sqrt{2\gamma}\begin{pmatrix}
    0 & 0 \\
    0 & I_n
    \end{pmatrix}\,,
     \end{equation}
     and $\mathcal D'$ is a smooth bounded domain with $c(x)\cdot \mathrm{n}(x)<0$ for all $x\in \partial\mathcal D'$ where $\mathrm{n}$ is the exterior normal to $\partial \mathcal D'$ (which, under \ref{A}, necessarily implies that $\lambda\in\mathcal D'\times\R^n$; besides,  necessarily $\lambda=(\lambda',0)$ for some $\lambda'\in\R^n$),
    it is proven in \cite{Freidlin2004} that, in particular if the initial condition is $\lambda$, then \eqref{eq:K} holds with
    \[H = \inf_{x\in\partial\mathcal D'} S(x) \]
    where, for $x\in\R^n$,
    \begin{align*}
     S(x)&=\inf\{I_{[0T]}(\varphi):~ \varphi_0=x_*,~\varphi_T=x,~\dot{\varphi}_0=0,~T\geq 0, ~\varphi\in C_{0T}\}\\
I_{[0,T]}(\varphi)&=\begin{cases}
\frac{1}{4}\int_0^T \left|\Ddot{\varphi}_t+\gamma \dot{\varphi}_t+c(\varphi_t)\right|^2\, dt,\quad\text{if}\,\,\dot{\varphi}\text{ is abs. cont.}\\
+\infty\quad\text{otherwise}.
\end{cases}    
\end{align*}
Finally, notice that if the condition $c\cdot \mathrm{n}<0$ is satisfied at the boundary of $\mathcal D'$, then  families of domains $\mathcal D'_{u,\xi}$ for $u\in\{e,i\}$ and $\xi\geqslant 0$ are easily constructed to satisfy the same condition and to meet the requirements of \ref{K}. Moreover, it is not difficult to see that $S$ is continuous, from which $H_{i,\xi}:= \inf_{x\in\mathcal D_{u,\xi}} S(x) \rightarrow H$ as $\xi$ vanishes. As a consequence, in the framework presented here, \ref{K} holds.

\medskip

To conclude, let us recall two classical cases where the quasi-potential is explicit.

\begin{enumerate}
    \item \textbf{The equilibrium elliptic case}. If $a+b(\cdot,\delta_\lambda)=-\na U$ for some $U\in \mathcal C^2(\R^d)$ and $\Sigma=I_d$, then it is well-known (see e.g. \cite{FW}) that $\overline{V}=U-U(\lambda)$ (notice that $\lambda$ is necessarily the unique global minimum of $U$ since all solutions to $\dot z= -\na U(z)$ converges to $\lambda$). In this case, in Theorem~\ref{th:mr}, $H=\inf_{z\in\partial\mathcal D} U-U(\lambda)$. 
    \item \textbf{The equilibrium kinetic  case}. If $d=2n$ and $a$ and $\Sigma$ are given by \eqref{eq:KramLangevin} with $c = \na U$ for some $U\in\mathcal C^2$,
  then $S=U-U(\lambda')$, as proven in \cite{Freidlin2004}. Again, in this case, if $\mathcal D=\mathcal D'\times\R^n$ with $-\na U\cdot \mathrm n<0$ at the boundary of $\mathcal D'$ then, in Theorem~\ref{th:mr}, $H=\inf_{z\in\partial\mathcal D'} U-U(\lambda')$. 
\end{enumerate}

\subsubsection*{Acknowledgments}

This work is supported by the  French ANR grant METANOLIN (ANR-19-CE40-0009). P. Monmarch{\'e} acknowledges financial support by the  French ANR grant SWIDIMS (ANR-20-CE40-0022). The research of M. H. Duong was supported by the EPSRC Grant EP/V038516/1. M. Toma\v sevi\'c was supported by \textit{Fondation Math\'ematique Jacques Hadamard}.

\bibliographystyle{plain}
\bibliography{biblio.bib}

\end{document}